\newtheorem{theorem}{Theorem}[section]
\newtheorem{lemma}[theorem]{Lemma}
\newtheorem{proposition}[theorem]{Proposition}
\newtheorem{corollary}[theorem]{Corollary}
\newtheorem{example}[theorem]{Example}
\newtheorem{definition}[theorem]{Definition}
\newtheorem{remark}[theorem]{Remark}
\theoremstyle{definition}
\newcommand{\bzero}{{\bf 0}}
\newcommand{\bone}{{\bf 1}}
\newcommand{\bh}{{\bf h}}
\newcommand{\iu}{\mathrm{i}\mkern1mu} 
\newcommand{\ZZ}{\mathbb{Z}}
\newcommand{\RR}{\mathbb{R}}
\newcommand{\QQ}{\mathbb{Q}}
\newcommand{\ii}{\mathtt{i}}
\newcommand{\cS}{\mathcal{S}}
\newcommand{\cR}{\mathcal{R}}
\newcommand{\cI}{\mathcal{I}}
\newcommand{\cL}{\mathcal{L}}
\newcommand{\e}{\mathrm{e}}
\newcommand{\I}{\texttt{I}}
\newcommand{\J}{\texttt{J}}
\begin{document}

\title{Complex Hadamard Diagonalisable Graphs}

\author[A.~Chan]{Ada Chan\textsuperscript{1}}
\thanks{\textsuperscript{1}Department of Mathematics and Statistics, York University, Toronto, ON, M3J 1P3, Canada. {\tt ssachan@yorku.ca}}
\author[S.~Fallat]{Shaun Fallat\textsuperscript{2}}
\thanks{\textsuperscript{2}Department of Mathematics and Statistics, University of Regina, Regina, SK, S4S 0A2, Canada. {\tt shaun.fallat@uregina.ca}}
\author[S.~Kirkland]{Steve Kirkland\textsuperscript{3}}
\thanks{\textsuperscript{3}Department of Mathematics, University of Manitoba, Winnipeg, MB, R3T 2N2, Canada. 
{\tt stephen.kirkland@umanitoba.ca}}
\author[J.C.-H.~Lin]{Jephian C.-H.\ Lin\textsuperscript{4}}
\thanks{\textsuperscript{4}Department of Mathematics and Statistics, University of Victoria, Victoria, BC V8W 2Y2, Canada and Department of Applied Mathematics, National Sun Yat-sen University, Kaohsiung, Taiwan, 80424. {\tt jephianlin@gmail.com}}
\author[S.~Nasserasr]{Shahla Nasserasr\textsuperscript{5}}
\author[S.~Plosker]{Sarah Plosker\textsuperscript{5}}
\thanks{\textsuperscript{5}Department of Mathematics and Computer Science, Brandon University,
Brandon, MB R7A 6A9, Canada. {\tt nasserasrs@Brandonu.ca; ploskers@brandonu.ca}}

\keywords{complex Hadamard matrix, type II matrix, Cheeger inequality, equitable partition, quantum state transfer, graph products} 
\subjclass[2010]{ 	05C50,    
 		15A18,   
 		 	81P45   
 		}


\begin{abstract}  
In light of recent interest in Hadamard diagonalisable graphs (graphs whose Laplacian matrix is diagonalisable by a Hadamard matrix), we generalise this notion from real to complex Hadamard matrices. We give some basic properties  and methods of constructing such graphs. We   show that a  large class of complex Hadamard diagonalisable graphs have vertex sets   forming an equitable partition, and that the Laplacian eigenvalues must be even integers. We provide a number of examples and constructions  of complex Hadamard diagonalisable graphs, including two special classes of graphs: the Cayley graphs over $\mathbb{Z}_r^d$, and  the non--complete extended $p$--sum (NEPS).  
We discuss necessary and sufficient conditions for  $(\alpha, \beta)$--Laplacian fractional revival and perfect state transfer on continuous--time quantum walks described by   complex Hadamard diagonalisable graphs and provide examples of such quantum state transfer. 
\end{abstract}

\maketitle

\section{Introduction}
 
There has been recent interest in graphs whose corresponding Laplacian matrix is diagonalisable by a Hadamard matrix \cite{BFK11, JKPSZ17, Khar}. Such graphs are said to be  Hadamard diagonalisable. We generalise this notion  to include complex Hadamard matrices, originally discussed in \cite{Sylvester}. Our primary motivation for considering this interesting problem is two--fold. One basic objective is to better understand the structure of graphs under non--conventional symmetry and regularity conditions by imposing particular entry--wise structure on certain eigenbases associated with their Laplacian matrices. Second, by expanding to the complex field, we can access more potential Hadamard matrices, which in turn opens the door to analysing additional graphs with even further potentially interesting properties. In this work,  we analyse  graphs that are Hadamard diagonalisable for certain popular classes of complex Hadamard matrices. We discuss the notion of continuous--time quantum walks, where the graph represents a  quantum spin network which allows for the transfer of information from one vertex to another. We are able to characterize when  $(\alpha, \beta)$--Laplacian fractional revival and perfect state transfer occurs based on the structure of the complex Hadamard matrix involved. 

The remainder of this section is devoted to definitions, notation, basic properties, examples, and graph constructions. In Section \ref{sec:Turyn} we consider graphs that are diagonalisable by a (real) Hadamard matrix, or a class of complex Hadamard matrices called Turyn Hadamard matrices. 
Equitable partitions arise naturally, coming from the structure of these classes of complex Hadamard matrices. We are also able to make connections to the Cheeger constant. 
Section \ref{sec:Butson} focuses on graphs that are diagonalisable by a  class of complex Hadamard matrices called Butson Hadamard matrices, which include as a subclass the Turyn Hadamard matrices. We give conditions on their Laplacian eigenvalues in terms of parity and multiplicity. In Section \ref{sec:Qwalks},  we provide  necessary and sufficient conditions characterising when  $(\alpha, \beta)$--Laplacian fractional revival and perfect state transfer occurs based on the structure of the complex Hadamard matrix involved.

\subsection{Definitions and Notation}
The results in this paper hold for weighted graphs with non-negative weights unless otherwise specified.
We treat unweighted graphs as weighted graphs with edge weight $1$.  We consider only simple graphs herein.  

We quickly review some basic definitions. Given a weighted graph $G$ on $n$ vertices, its corresponding \emph{adjacency matrix} is $A(G)=[a_{i,j}]\in M_n$ where $a_{i,j}$ represents the weight of the edge between vertices $i$ and $j$. The \emph{Laplacian matrix} of a simple weighted graph $G$ is $L(G)=D(G)-A(G)$, where $D(G)$ is the diagonal matrix of 
whose $(i,i)$--th entry  is  the sum of the weights of the edges incident to vertex $i$, called the \emph{weighted degree} of vertex $i$. It is well--known that $L(G)$ is positive semi--definite with the all--ones vector as a null--vector, and that the number of connected components of $G$ equals the multiplicity of the zero eigenvalue. 

\begin{definition}
A \emph{complex Hadamard matrix} $H$ is an $n\times n$ matrix, with all its entries having moduli one, satisfying
\begin{equation*}
HH^*  = n\I.
\end{equation*}
\end{definition}
Two complex Hadamard matrices $H_1$ and $H_2$ are {\sl equivalent} if 
\begin{equation*}
H_1 = M H_2 N,
\end{equation*}
where $M$ and $N$ are invertible monomial matrices (a monomial matrix is  the product of an invertible diagonal matrix and a permutation matrix) whose non--zero entries have moduli one. 
Examples of complex Hadamard matrices include (real) Hadamard matrices whose entries are $\pm 1$ and character tables of finite abelian groups.

Originally studied in \cite{Sylvester}, the concept of a  complex Hadamard matrix has other names in the literature: it is the same as a \emph{unit Hadamard matrix} defined in \cite{Craigen91}, as well as a flat  type II matrix \cite{CG10}. 
A type II matrix is the same as an \emph{inverse orthogonal matrix} \cite{Sylvester}.
Complex Hadamard matrices can also be normalised so that the first row and first column are all--ones vectors, though in the complex setting this is known as   \emph{dephased}.  
 Analogous to \cite{BFK11}, we say that a graph is \emph{complex Hadamard diagonalisable} if its Laplacian matrix can be diagonalised by some complex Hadamard matrix.

An $n\times n$ complex Hadamard matrix whose entries are all $r$--th roots of unity is called a Butson Hadamard matrix; the set of all Butson Hadamard matrices for fixed $r$ and $n$ is denoted $H(r, n)$ \cite{Butson}. In the specific case when the complex Hadamard matrix consists only of $\pm 1$ and $\pm \iu$, the matrix is called a Turyn Hadamard matrix \cite{Turyn}. Many complex Hadamard matrices of small orders can be found in \cite{Karol}.

\subsection{Basic Properties}
Some results from \cite{BFK11}  stated for Hadamard diagonalisable unweighted graphs, readily apply with little to no adaptation to the proofs, to the situation of complex Hadamard diagonalisable weighted graphs.  

\begin{lemma}\label{lem:SS4}
A weighted graph $G$ is complex Hadamard diagonalisable if and only if there is a dephased complex Hadamard matrix that diagonalises $L(G)$.
\end{lemma}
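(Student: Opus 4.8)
The $(\Leftarrow)$ direction is immediate: a dephased complex Hadamard matrix is in particular a complex Hadamard matrix, so its existence already shows that $G$ is complex Hadamard diagonalisable. For $(\Rightarrow)$, suppose $H$ is a complex Hadamard matrix with $\tfrac1n H^* L(G) H = \Lambda$ diagonal; equivalently $L(G) = \tfrac1n H\Lambda H^*$, and the columns of $H$ are eigenvectors of $L(G)$, each of norm $\sqrt n$. The plan is to reach a dephased diagonaliser by three successive modifications of $H$, each of which manifestly preserves both being complex Hadamard and diagonalising $L(G)$.

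First I would move a null vector of $L(G)$ into the first column. Since $\bone \in \ker L(G)$, the value $0$ occurs among the diagonal entries of $\Lambda$, so after permuting the columns of $H$ (replacing $H$ by $HP$ and $\Lambda$ by $P\trans\Lambda P$) we may assume that the first column $h_1$ lies in $\ker L(G)$. The structural fact I would invoke is that $\ker L(G)$ consists exactly of the vectors that are constant on each connected component of $G$: this follows from $\bv^* L(G)\bv = \sum_{ij\in E(G)} w_{ij}\lvert v_i - v_j\rvert^2$ together with positive semidefiniteness of $L(G)$. In particular $h_1$ is constant on components and has all entries of modulus $1$.

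Next, right-multiply by $D_1 = \diag(h_{1,1}^{-1},\dots,h_{1,n}^{-1})$, whose entries have modulus $1$, so that $H_1 := HD_1$ has an all-ones first row; it is still complex Hadamard and still diagonalises $L(G)$ (right multiplication by a unimodular diagonal matrix merely rescales the eigenvectors), and the first column of $H_1$ is $h_{1,1}^{-1}h_1$, still constant on components. Finally, left-multiply by $D_2 = \diag((H_1)_{1,1}^{-1},\dots,(H_1)_{n,1}^{-1})$ and set $\widetilde H := D_2 H_1$, which has an all-ones first column. Since the first column of $H_1$ is constant on components, $D_2$ is constant on components, hence $D_2 L(G) D_2^* = L(G)$ (the off-diagonal entries of $L(G)$ are supported on edges, along which the relevant entries of $D_2$ coincide). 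Consequently
\[
L(G) = D_2 L(G) D_2^* = \frac1n (D_2 H_1)\Lambda(D_2 H_1)^* = \frac1n \widetilde H \Lambda \widetilde H^*,
\]
so $\widetilde H$ diagonalises $L(G)$; it is complex Hadamard ($D_2 D_2^* = \I$); and $(D_2)_{1,1} = (H_1)_{1,1}^{-1} = 1$ shows its first row remains all ones. Thus $\widetilde H$ is a dephased complex Hadamard matrix diagonalising $L(G)$.

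The delicate point — the main obstacle — is the last step: left-multiplying a diagonaliser of $L(G)$ by a diagonal matrix does not in general preserve the diagonalising property, which is exactly why the first step is needed. Arranging that the first column is a null vector of $L(G)$ forces the normalising matrix $D_2$ to be constant on components, hence to satisfy $D_2 L(G) D_2^* = L(G)$. For connected $G$ this is transparent, since then $\ker L(G) = \spn\{\bone\}$ and $h_1$ is already a unimodular multiple of $\bone$; the component bookkeeping above is only needed to accommodate disconnected graphs.
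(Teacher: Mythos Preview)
Your argument is correct. The paper does not actually supply a proof of this lemma; it only remarks that the corresponding result from \cite{BFK11} for real Hadamard matrices carries over ``with little to no adaptation,'' and then states the lemma. What you have written is precisely such an adaptation, done carefully: scale columns to normalise the first row, then scale rows to normalise the first column, checking at each step that the complex Hadamard property and the diagonalising property survive.

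The point you flag as delicate---that left multiplication by a unimodular diagonal matrix $D_2$ need not commute with $L(G)$ in general---is indeed the only real content. Your resolution (arrange first that the leading column lies in $\ker L(G)$, so that $D_2$ is constant on connected components and hence $D_2 L(G) D_2^* = L(G)$) is the right one, and your explicit treatment of the disconnected case is a genuine improvement over simply asserting that ``the proof in \cite{BFK11} goes through.'' For connected $G$ your own remark shows the argument collapses: the first column is then already a unimodular multiple of $\bone$, so after the column scaling it is automatically $\bone$ and the row scaling step is vacuous.
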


The following result states that the regularity conclusion found in \cite[Theorem 5]{BFK11} holds for complex Hadamard diagonalisable graphs. 
We say that a weighted graph $G$ is \emph{weighted--regular} if for each vertex, the weighted degree of each vertex is the same. 

\begin{theorem}\label{thm:regularity}
If $G$ is a weighted graph and its Laplacian matrix is complex Hadamard diagonalisable,  then $G$ is weighted--regular.
\end{theorem}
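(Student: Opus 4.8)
The plan is to use the spectral decomposition of $L(G)$ provided by a dephased complex Hadamard matrix and read off the diagonal entries. By Lemma~\ref{lem:SS4}, we may assume there is a dephased complex Hadamard matrix $H$ (so the first column of $H$ is the all-ones vector $\bone$, which is the null-vector of $L(G)$) such that $H^{-1}L(G)H = \Lambda$ is diagonal. Since $HH^* = n\I$, we have $H^{-1} = \frac{1}{n}H^*$, so $L(G) = \frac{1}{n}H\Lambda H^*$. Writing $H = [\bh_1 \mid \bh_2 \mid \cdots \mid \bh_n]$ with $\bh_1 = \bone$, and noting that the eigenvalue associated with $\bh_1$ is $0$ (as $L(G)\bone = \bzero$), we obtain
\begin{equation*}
L(G) = \frac{1}{n}\sum_{k=2}^{n} \lambda_k \bh_k \bh_k^*,
\end{equation*}
where $\lambda_2,\dots,\lambda_n$ are the nonzero-index eigenvalues.

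Next I would compute the $(i,i)$-th diagonal entry of $L(G)$, which is exactly the weighted degree $d_i$ of vertex $i$. The $(i,i)$-th entry of $\bh_k\bh_k^*$ is $|(\bh_k)_i|^2 = 1$, since every entry of a complex Hadamard matrix has modulus one. Therefore
\begin{equation*}
d_i = L(G)_{i,i} = \frac{1}{n}\sum_{k=2}^{n} \lambda_k,
\end{equation*}
which is independent of $i$. Hence all weighted degrees are equal and $G$ is weighted--regular.

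I expect no serious obstacle here; the argument is essentially the same as in \cite[Theorem 5]{BFK11}, with the only adaptation being that one uses $|H_{i,k}| = 1$ (true for complex Hadamard matrices) in place of $H_{i,k} = \pm 1$. One should double-check the normalisation $H^{-1} = \frac1n H^*$ and the fact, guaranteed by Lemma~\ref{lem:SS4} together with the positive semidefiniteness of $L(G)$ and $L(G)\bone = \bzero$, that the dephased diagonalising matrix really can be taken to have $\bone$ as the eigenvector for eigenvalue $0$; everything else is a one-line computation.
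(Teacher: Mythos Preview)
Your proof is correct and follows essentially the same approach as the paper: both express $L(G)$ as a linear combination of the rank-one matrices $\bh_k\bh_k^*$ and observe that each such matrix has constant diagonal entries (all equal to $1$) because every entry of $H$ has modulus one. Your version is slightly more explicit (invoking Lemma~\ref{lem:SS4} and writing out the normalisation $H^{-1}=\tfrac{1}{n}H^*$), but the dephased assumption is not actually needed for this argument.
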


\begin{proof}
Suppose $G$ is complex Hadamard diagonalisable by some complex Hadamard matrix $H$. Let $\bh_k$ be the $k$--th column of $H$.  Since $\{\bh_k\}_k$ forms an eigenbasis of $L(G)$, we may write $L(G)$ as a linear combination of $\bh_k\bh_k^*$.  Since each of $\bh_k\bh_k^*$ has constant diagonal entries (all ones), $L(G)$ also has constant diagonal entries.  Therefore, $G$ is weighted--regular.
\end{proof}

Note that \cite{BFK11} only discussed the case where the Laplacian matrix, rather than the adjacency matrix, associated to a graph is Hadamard diagonalisable. For a weighted--regular graph $G$, $A(G)$ is complex Hadamard diagonalisable by $H$ if and only if $L(G)$ is diagonalisable by $H$. The following result is an adjacency matrix version of Theorem~\ref{thm:regularity}, which shows that the property of being weighted--regular is necessary for $A(G)$ to be diagonalisable by $H$. Thus there does not exist a non--regular graph $G$ such that $A(G)$ is diagonalisable by some complex Hadamard matrix. 

\begin{theorem}
If $G$ is a weighted graph and its adjacency matrix is diagonalisable by a complex Hadamard,  then $G$ is weighted--regular.
\end{theorem}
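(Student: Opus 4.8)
The plan is to follow the spirit of the proof of Theorem~\ref{thm:regularity}, but a genuinely different mechanism is needed: the diagonal entries of $A(G)$ are identically zero (no loops) rather than recording the weighted degrees, so they carry no information. Instead I would combine Perron--Frobenius theory with the observation that a complex Hadamard matrix supplies an eigenvector all of whose entries have modulus one.

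First I would set up notation. Let $A=A(G)$, a real symmetric entrywise--nonnegative matrix, and suppose $H^{-1}AH=D=\diag(\lambda_1,\dots,\lambda_n)$ for a complex Hadamard matrix $H$. Reading off the columns of $AH=HD$, each column $\bh_k$ of $H$ is an eigenvector of $A$ for $\lambda_k$, and every entry of $\bh_k$ has modulus one. Since $A$ is symmetric, all $\lambda_k$ are real; set $\rho=\max_k\lambda_k=\lambda_{\max}(A)$ and fix a column $\bh$ with $A\bh=\rho\bh$ (such a column exists, as $\rho$ occurs among $\lambda_1,\dots,\lambda_n$).

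Next, decompose $G$ into its connected components $G_1,\dots,G_c$, so that $A=\bigoplus_t A(G_t)$ with each $A(G_t)$ irreducible and nonnegative (an isolated vertex contributes a $1\times1$ zero block, which is harmless). Restricting $A\bh=\rho\bh$ to the coordinates indexed by $G_t$ gives $A(G_t)\bh^{(t)}=\rho\bh^{(t)}$, and $\bh^{(t)}\ne\bzero$ because every entry of $\bh$ is nonzero. Hence $\rho$ is an eigenvalue of $A(G_t)$; since $\rho_t:=\lambda_{\max}(A(G_t))\le\rho$ and $\rho$ is an eigenvalue of $A(G_t)$, we conclude $\rho=\rho_t$. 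By Perron--Frobenius, the $\rho_t$--eigenspace of $A(G_t)$ is one--dimensional and spanned by a strictly positive vector $\bu^{(t)}$, so $\bh^{(t)}=\gamma_t\bu^{(t)}$ for some $\gamma_t\in\CC\setminus\{0\}$. Comparing moduli entrywise, $|\gamma_t|\,u^{(t)}_i=1$ for every vertex $i$ of $G_t$, so $\bu^{(t)}$ is a constant vector. Therefore $\bone$ (restricted to $G_t$) is an eigenvector of $A(G_t)$ for the eigenvalue $\rho$, which says precisely that every vertex of $G_t$ has weighted degree $\rho$. As this holds for all $t$, $G$ is weighted--regular.

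I do not anticipate a real obstacle here. The only subtle point is conceptual: unlike the Laplacian case, regularity is not visible on the diagonal of $A(G)$ and must instead be extracted from the eigenvector structure, with the Hadamard hypothesis used exactly once --- to produce a modulus--one eigenvector for the largest eigenvalue, which Perron--Frobenius then forces to be flat. The bookkeeping over several components is the one place I would be slightly careful, to ensure each component is "seen" by the chosen eigenvector; this works because all entries of a Hadamard matrix are nonzero.
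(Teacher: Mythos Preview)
Your proof is correct and follows essentially the same route as the paper's: both select a modulus--one eigenvector for the spectral radius and apply Perron--Frobenius theory componentwise to force the Perron vector on each block to be constant. The only cosmetic difference is that the paper reaches this via the triangle inequality $r\bone=|Av|\le A|v|=A\bone$ together with the min--row--sum characterisation of equality in Perron--Frobenius, whereas you invoke simplicity of the Perron eigenvalue directly; these are interchangeable Perron--Frobenius facts.
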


\begin{proof}
Let $A$ be the adjacency matrix of $G$, and denote its spectral radius by $r$. Since $A$ is  diagonalisable by a complex Hadamard matrix, let $v$ be an eigenvector of $A$ corresponding to $r$, such that each entry of $v$ has modulus 1.  For any vector $u$, let $|u|$ denote the nonnegative vector whose entries consist of the moduli of the entries in $u$. Observe that by the triangle inequality, we have
\[
r\bone=|rv|=|Av|\leq A|v|=A\bone,
\]
where $\bone$ denotes the all--ones vector. In particular, for each connected component of $G$, $r$ is less than or equal to the minimum row sum of the corresponding principal submatrix of $A$. A standard result from Perron--Frobenius theory \cite{seneta} states that for an irreducible nonnegative matrix $M$, the spectral radius is always bounded below by the minimum row sum, with equality holding if and only if the row sums of $M$ are all equal. Applying this result to $A$, together with the above equation, it follows that each connected component of $G$ is weighted--regular, with  weighted--degree $r$. Thus $G$ is weighted--regular. 
\end{proof} 

\subsection{Examples and Graph Constructions}

Here, we provide details on how to construct graphs that are diagonalisable by a complex Hadamard matrix, and characterise the low--dimensional cases. 

As an example, consider the complete graph $K_n$ and the complete bipartite graph $K_{\frac{n}2, \frac{n}2}$. Suppose $H$ is a dephased complex Hadamard matrix of order $n$. For $k=1, \dots, n$, let $\mathbf{h}_k$ denote the $k$--th column of $H$. Then the space of $n\times n$ matrices diagonalisable by $H$ is 
\[
\mathcal S=\operatorname{span}\{\mathbf{h}_k\mathbf{h}_k^*\,:\, k=1, \dots, n\}.
\]
Observe that $\I$ and $\J=\mathbf{h}_1\mathbf{h}_1^*$, are in $\mathcal S$, so $K_n$ is diagonalisable by $H$. 
Contrary to Hadamard matrices,  there exists a complex Hadamard matrix of size $n$, for $n\geq 1$, with the character table of $\mathbb{Z}_n$ as an example.
Hence every complete graph is complex Hadamard diagonalisable.

Furthermore, if $H$ has a column $\mathbf{h}_j \ne \bone$ whose entries are $\pm 1$, then 
\[
\frac{n}2\I+\frac12(\mathbf{h}_j\mathbf{h}_j^*-\J)
\]
is the Laplacian matrix of $K_{\frac{n}2, \frac{n}2}$, and is diagonalisable by $H$.

A Cayley graph on a finite abelian group $\Gamma$ with connection set $C\subset\mathbb{Z}_r^d\setminus \{\bzero\}$ is a graph with vertex set $\Gamma$,
and two vertices are adjacent if and only if their difference is in $C$.
The Cayley graphs on $\mathbb{Z}_2^d$ are also known as cubelike graphs \cite{cubelike1, cubelike2}, with the hypercube being the prototypical example.

\begin{example}
Let $\Gamma$ be a finite abelian group of order $n$.   
Then the transpose of its character table, $H$, is a dephased complex Hadamard matrix  whose columns are the characters of $\Gamma$.
The space $\cS$ of matrices diagonalisable by $H$ has dimension $n$.

Take the regular representation of $\Gamma$, $\{A_g : g \in \Gamma\}$; then $\bh_j$ is an eigenvector of $A_g$ with eigenvalue $\bh_j(g)$. 
Since the set $\{A_g: g\in G\}$ spans an $n$--dimensional space, it contains all matrices diagonalisable by $H$.
Hence the (directed) Cayley graphs of $\Gamma$ are the only unweighted (directed) graphs that are diagonalisable by $H$.

Let $\mathbb{Z}_2$ be the field of two elements.  Let $C\subset \mathbb{Z}_2^d$ with $\textbf{0}\notin C$.  Since a cubelike  graph  is a Cayley graph of $\ZZ_2^d$,  we recover Corollary~1 of \cite{JKPSZ17} which states that $L(G)$ is diagonalisable by the Hadamard matrix 
\begin{equation*}
\begin{bmatrix} 1&1\\1&-1\end{bmatrix}^{\otimes d}
\end{equation*}
if and only if it is a a cubelike graph.

All unweighted regular graphs on 6 or fewer vertices are Cayley graphs and are complex Hadamard diagonalisable.
\end{example}

The notation $G^c$ denotes the \emph{complement} of the unweighted graph $G$. It is straightforward to check the following result, which, restricted to the real setting, appears in \cite[Lemma 7]{BFK11}. 
 
\begin{proposition}
Let $G$ be a complex Hadamard  diagonalisable unweighted graph, diagonalised by a complex Hadamard matrix $H$. Then $G^c$ is also  diagonalised by $H$.
\end{proposition}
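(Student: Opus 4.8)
The plan is to reduce the assertion to one elementary fact about the all--ones matrix $\J$. The key is the identity
\[
L(G)+L(G^c)=L(K_n)=n\I-\J,
\]
valid since $G$ is unweighted, so that $L(G^c)=n\I-\J-L(G)$. Write $\bh_1,\dots,\bh_n$ for the columns of $H$ and let $\cS=\spn\{\bh_k\bh_k^*: 1\le k\le n\}$ be the space of all matrices diagonalised by $H$. I would show that $\I\in\cS$ and $\J\in\cS$; since $L(G)\in\cS$ by hypothesis and $\cS$ is a linear subspace of $M_n$, it then follows that $L(G^c)=n\I-\J-L(G)\in\cS$, i.e.\ $L(G^c)$ is diagonalised by $H$, which is what is claimed.

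The membership $\I\in\cS$ is immediate from $HH^*=n\I$, which gives $\I=\tfrac1n\sum_k\bh_k\bh_k^*$. For $\J$ I would invoke Lemma~\ref{lem:SS4} to assume that $H$ is dephased; then its first column is $\bone$, so $\J=\bone\bone^*=\bh_1\bh_1^*\in\cS$. (What is really used here is only that $\bone$ be a unimodular scalar multiple of some column of $H$. This is automatic for a dephased $H$; it also holds, for any $H$ diagonalising $L(G)$, whenever $G$ is connected, since then the $0$--eigenspace of $L(G)$ is one--dimensional and must be spanned by a single column of $H$, which is therefore a multiple of $\bone$.) Assembling the pieces, $L(G^c)=n\I-\bh_1\bh_1^*-L(G)$ is an explicit linear combination of elements of $\cS$, which finishes the argument.

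The only delicate point — and the reason for reducing to a dephased $H$ via Lemma~\ref{lem:SS4} — is the membership $\J\in\cS$: for a complex Hadamard matrix that is not dephased and none of whose columns is aligned with $\bone$, $\J$ (and hence $L(K_n)$ and $L(G^c)$) need not be diagonalised by $H$. Once $H$ is normalised so that $\bone$ occurs as a column, everything else is routine: it amounts to the identity $L(G)+L(G^c)=n\I-\J$ together with the fact that the matrices diagonalised by $H$ form a linear subspace containing $\I$ and $\J$.
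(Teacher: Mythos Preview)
Your proof is correct and matches the paper's implicit approach: the paper does not write out a proof (it says ``It is straightforward to check''), but the surrounding discussion already establishes $\I,\J\in\cS$ for a dephased $H$, and your argument simply combines this with the identity $L(G)+L(G^c)=n\I-\J$. Your careful observation that $\J\in\cS$ genuinely requires $\bone$ to be (a unimodular multiple of) a column of $H$---and hence the reduction to a dephased $H$ via Lemma~\ref{lem:SS4}, or alternatively connectedness of $G$---is a point the paper passes over in silence; as literally stated, the proposition can fail for a non--dephased $H$ when $G$ is disconnected (e.g.\ $G=K_2+K_2$ with a suitable $H$ whose null--space columns are not aligned with $\bone$), so your caveat is well taken.
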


 Given two graphs $G_1$ and $G_2$ on $n_1$ and $n_2$ vertices, respectively, their \emph{direct product} $G_1\times G_2$ is the graph with adjacency matrix $A(G_1)\otimes A(G_2)$.

\begin{lemma}
\label{lem:tensor}
Let $H_1$ and $H_2$ be complex Hadamard matrices. 
Suppose $G_1, \ldots, G_r$ are diagonalisable by $H_1$ and $G_{r+1}, \ldots, G_s$ are diagonalisable by $H_2$.
Then the weighted graph corresponding to the adjacency matrix
\begin{equation*}
\sum_{k = 1}^r \sum_{\ell=r+1}^s w_{k,\ell} A(G_k)\otimes A(G_\ell)
\end{equation*}
is diagonalisable by $H_1\otimes H_2$, for $w_{k,\ell} \in \RR$.
\qed
\end{lemma}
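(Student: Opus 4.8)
The plan is to reduce everything to a single observation about Kronecker products: the columns of $H_1\otimes H_2$ are exactly the vectors $\bh^{(1)}_i\otimes\bh^{(2)}_j$, where $\bh^{(1)}_i$ ranges over the columns of $H_1$ and $\bh^{(2)}_j$ over the columns of $H_2$. First I would verify that $H_1\otimes H_2$ is itself a complex Hadamard matrix: its entries are products of entries of $H_1$ and $H_2$, hence of modulus one, and by the mixed--product property of the Kronecker product together with $(X\otimes Y)^*=X^*\otimes Y^*$,
\[
(H_1\otimes H_2)(H_1\otimes H_2)^* = (H_1H_1^*)\otimes(H_2H_2^*) = (n_1\I)\otimes(n_2\I) = n_1n_2\I,
\]
where $n_1,n_2$ are the orders of $H_1,H_2$.

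Next I would show that each summand $A(G_k)\otimes A(G_\ell)$ is diagonalised by $H_1\otimes H_2$. By hypothesis every column $\bh^{(1)}_i$ of $H_1$ is an eigenvector of $A(G_k)$, say $A(G_k)\bh^{(1)}_i=\lambda_{k,i}\bh^{(1)}_i$, and likewise $A(G_\ell)\bh^{(2)}_j=\mu_{\ell,j}\bh^{(2)}_j$ for the columns of $H_2$. The mixed--product property then gives
\[
(A(G_k)\otimes A(G_\ell))(\bh^{(1)}_i\otimes\bh^{(2)}_j) = (A(G_k)\bh^{(1)}_i)\otimes(A(G_\ell)\bh^{(2)}_j) = \lambda_{k,i}\mu_{\ell,j}\,(\bh^{(1)}_i\otimes\bh^{(2)}_j),
\]
so every column of $H_1\otimes H_2$ is an eigenvector of $A(G_k)\otimes A(G_\ell)$; since $H_1\otimes H_2$ is invertible (being Hadamard, indeed with pairwise orthogonal columns), it follows that $(H_1\otimes H_2)^{-1}\bigl(A(G_k)\otimes A(G_\ell)\bigr)(H_1\otimes H_2)$ is diagonal.

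Finally I would invoke the elementary fact that the set of matrices diagonalised by a fixed invertible matrix $P$ is a linear subspace: if $P^{-1}M_1P$ and $P^{-1}M_2P$ are diagonal then so is $P^{-1}(c_1M_1+c_2M_2)P$. Taking $P=H_1\otimes H_2$ and $M_{k,\ell}=A(G_k)\otimes A(G_\ell)$, this yields that $\sum_{k=1}^r\sum_{\ell=r+1}^s w_{k,\ell}A(G_k)\otimes A(G_\ell)$ is diagonalised by $H_1\otimes H_2$, which is the claim; note also that this matrix is real and symmetric, hence is indeed the adjacency matrix of a weighted graph (with possibly negative weights unless the $w_{k,\ell}$ are chosen non--negative). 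There is no genuine obstacle in this argument: the only points requiring any care are the bookkeeping of the Kronecker mixed--product identity and the remark that being diagonalised by $H_1\otimes H_2$ is preserved under real linear combinations, so the whole statement is essentially a formal consequence of how columns of Kronecker products transform under Kronecker products of operators.
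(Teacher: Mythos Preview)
Your argument is correct and is exactly the standard verification the paper has in mind; the paper in fact omits the proof entirely (the lemma is stated with a terminal \qed\ and no proof), so your write--up simply makes explicit the routine Kronecker mixed--product computation and the observation that matrices diagonalised by a fixed invertible $P$ form a linear space.
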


\begin{remark}
Lemma~\ref{lem:tensor} can be extended to any finite number of graphs: For any $d\in \mathbb{N}$, let $H_i$ be a complex Hadamard matrix for all  $i=1, \dots d$ and, for each $i=1, \dots, d$ and for each $j=1,\dots, r$,  suppose the graph $G_{i,j}$ is  diagonalisable by $H_i$. Then the weighted graph corresponding to the adjacency  matrix
\begin{equation*}
 \sum_{j=1}^{r} \otimes_{i=1}^d w_{i,j} A(G_{i,j})
\end{equation*}
is diagonalisable by $\displaystyle \otimes_{i=1}^d H_i$, for $w_{i,j} \in \RR$. 

For $i=1,\ldots, d$, let $G_i$ be a graph where $L(G_i)$ is diagonalisable by some complex Hadamard matrix $H_i$.
The non--complete extended $p$--sum (NEPS) of the  graphs $G_1, \dots, G_d$ with basis set $\Omega\subset \mathbb{Z}_2^d\backslash \{\mathbf 0\}$, denoted $\operatorname{NEPS}(G_1, \dots, G_d; \Omega)$ is a graph with vertex set $V(G_1)\times \cdots\times V(G_d)$ and adjacency matrix
\begin{equation*}
A_\Omega=\sum_{\beta\in \Omega}A(G_1)^{\beta_1}\otimes \cdots\otimes A(G_m)^{\beta_d}.
\end{equation*}
Note that the identity matrix is diagonalisable by any complex Hadamard matrix of the same size.   Hence 
 $\operatorname{NEPS}(G_1, \dots, G_d; \Omega)$ is diagonalisable by $H_1\otimes \ldots \otimes H_d$ when $G_i$ is diagonalisable by $H_i$, for $i=1,\ldots,d$.   
 See \cite{NEPS_PST, NEPS_PGST} for work done on perfect state transfer and pretty good state transfer on NEPS. 
\end{remark}

We recall some basic operations on graphs. Let $G_1=(V_1, E_1)$ and $G_2=(V_2,E_2)$ be graphs on disjoint vertex sets. 
The \emph{union} of the graphs $G_1$ and $G_2$  is $G_1+G_2=(V_1\cup V_2, E_1\cup E_2)$.
The \emph{join} of the graphs $G_1$ and $G_2$, denoted by $G_1\vee G_2$,  is the graph obtained from $G_1+G_2$ by adding new edges between each vertex of $G_1$ and each vertex of $G_2$.

In \cite{JKPSZ17},  Johnston et al defined the \emph{merge of two graphs $G_1$ and $G_2$ of order $n$ with respect to positive weights $w_1$ and $w_2$}, denoted by 
$G_1 \tensor[_{w_1}]{\odot}{_{w_2}} G_2$, to be the graph with adjacency matrix
\begin{equation*}
\begin{bmatrix}
w_{1}A(G_1)& {{w_{2}}}A(G_2)
\\ \noalign{\vspace{2pt}}
{{w_{2}}}A(G_2) & w_{1}A(G_1)
\end{bmatrix}.
\end{equation*}
If $G_1$ and $G_2$ are unweighted graphs on the same vertex set and $E(G_1) \cap E(G_2) = \phi$ then the merge $G_1 \tensor[_{w_1}]{\odot}{_{w_2}} G_2$ with $w_1=w_2=1$ is
called the \emph{double cover} of the graph with adjacency matrix $A(G_1)+A(G_2)$, denoted by $G_1 \ltimes G_2$ in \cite{MR3434518}.   
If $G_1$ is empty, then $G_1 \ltimes G_2$ is called \emph{the bipartite double  cover} of $G_2$.

\begin{corollary}
If $A(G_1)$ and $A(G_2)$ are adjacency matrices corresponding to weighted graphs diagonalisable by a complex Hadamard matrix $H$, then 
$G_1+G_2$, $G_1 \vee G_2$ and $G_1 \tensor[_{w_1}]{\odot}{_{w_2}} G_2$ are diagonalisable by
\begin{equation*}
\begin{bmatrix} H & H \\ H &-H\end{bmatrix}.
\end{equation*}
\end{corollary}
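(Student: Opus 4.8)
The plan is to reduce the statement to one structural description of the order-$2n$ matrix $\tilde H:=\begin{bmatrix}H&H\\H&-H\end{bmatrix}$ and then to recognise each of the three adjacency matrices inside the space that $\tilde H$ diagonalises. First I would note that $\tilde H=\begin{bmatrix}1&1\\1&-1\end{bmatrix}\otimes H$ has all entries of modulus one and that $\tilde H\tilde H^*=\left(\begin{bmatrix}1&1\\1&-1\end{bmatrix}\begin{bmatrix}1&1\\1&-1\end{bmatrix}^*\right)\otimes(HH^*)=(2\I)\otimes(n\I)=2n\I$, so $\tilde H$ is a complex Hadamard matrix with $\tilde H^{-1}=\tfrac1{2n}\tilde H^*$. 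By Lemma~\ref{lem:SS4} I may also assume $H$ is dephased, so that $\bone=\bh_1$ is the first column of $H$ and $\J=\bone\bone^*=\bh_1\bh_1^*$ is diagonalised by $H$.

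Next I would pin down which matrices $\tilde H$ diagonalises. Let $\mathcal S=\operatorname{span}\{\bh_k\bh_k^*:k=1,\dots,n\}$ be the space of $n\times n$ matrices diagonalised by $H$. The columns of $\tilde H$ are $\begin{bmatrix}\bh_k\\\bh_k\end{bmatrix}$ and $\begin{bmatrix}\bh_k\\-\bh_k\end{bmatrix}$ for $k=1,\dots,n$, whose outer products are $\begin{bmatrix}\bh_k\bh_k^*&\bh_k\bh_k^*\\\bh_k\bh_k^*&\bh_k\bh_k^*\end{bmatrix}$ and $\begin{bmatrix}\bh_k\bh_k^*&-\bh_k\bh_k^*\\-\bh_k\bh_k^*&\bh_k\bh_k^*\end{bmatrix}$; taking spans, a $2n\times 2n$ matrix is diagonalised by $\tilde H$ exactly when it has the block form $\begin{bmatrix}P&Q\\Q&P\end{bmatrix}$ with $P,Q\in\mathcal S$. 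Equivalently, a direct block multiplication (using $\tilde H^{-1}=\tfrac1{2n}\tilde H^*$) gives
\[
\tilde H^{-1}\begin{bmatrix}P&Q\\Q&P\end{bmatrix}\tilde H=\begin{bmatrix}\tfrac1n H^*(P+Q)H&\zero\\\zero&\tfrac1n H^*(P-Q)H\end{bmatrix},
\]
which is diagonal precisely when $P+Q$ and $P-Q$ — hence $P$ and $Q$ — lie in $\mathcal S$; this is also an instance of Lemma~\ref{lem:tensor} with $H_1=\begin{bmatrix}1&1\\1&-1\end{bmatrix}$ and $H_2=H$ after writing each block as a tensor product.

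It then remains to recognise the three adjacency matrices in this form. The merge has adjacency matrix $\begin{bmatrix}w_1A(G_1)&w_2A(G_2)\\w_2A(G_2)&w_1A(G_1)\end{bmatrix}$, which fits with $P=w_1A(G_1)$ and $Q=w_2A(G_2)$, both in $\mathcal S$ since $A(G_1),A(G_2)\in\mathcal S$ by hypothesis and $\mathcal S$ is a subspace. The union $G_1+G_2$ and the join $G_1\vee G_2$ are handled the same way: writing their adjacency matrices in $2\times 2$ block form and matching them to the pattern of the previous step, the only additional ingredient is that $\zero\in\mathcal S$ trivially and $\J=\bh_1\bh_1^*\in\mathcal S$ (the reason the dephasing was arranged in the first step), the latter being the off-diagonal block contributed by the join. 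The characterisation of the second step applied to each of the three block matrices then yields the corollary.

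The step I expect to require the most care is the second one — establishing precisely which $2n\times 2n$ matrices $\tilde H$ diagonalises. The two things to verify there are that the ``$+$'' and ``$-$'' columns of $\tilde H$ contribute independently, so that $P$ and $Q$ in $\begin{bmatrix}P&Q\\Q&P\end{bmatrix}$ each range freely over $\mathcal S$, and that ``diagonalised by $H$'' is preserved under $P,Q\mapsto P\pm Q$ (so that the surviving diagonal blocks of $\tilde H^{-1}(\cdot)\tilde H$ are genuinely diagonal and the off-diagonal blocks genuinely vanish). Everything after that is the routine matching of each graph operation to the block pattern together with the fact that $\mathcal S$ is a subspace containing $\I$ and $\J$.
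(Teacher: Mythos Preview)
Your handling of the merge is correct and is essentially the paper's argument. The characterisation in your second step --- that $\tilde H=\begin{bmatrix}1&1\\1&-1\end{bmatrix}\otimes H$ diagonalises exactly the block matrices $\begin{bmatrix}P&Q\\Q&P\end{bmatrix}$ with $P,Q\in\mathcal S$ --- is also correct, and is in fact sharper than the paper's bare appeal to Lemma~\ref{lem:tensor}.

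The gap is in your third step, where you assert that $A(G_1+G_2)$ and $A(G_1\vee G_2)$ ``match the pattern.'' They do not: $A(G_1+G_2)=\begin{bmatrix}A(G_1)&0\\0&A(G_2)\end{bmatrix}$ has equal diagonal blocks only when $A(G_1)=A(G_2)$. Your own conjugation formula, applied to $\begin{bmatrix}P_1&0\\0&P_2\end{bmatrix}$, yields off-diagonal block $\tfrac{1}{2n}H^*(P_1-P_2)H$, which is nonzero whenever $A(G_1)\neq A(G_2)$. A concrete failure: with $H=\begin{bmatrix}1&1\\1&-1\end{bmatrix}$, $G_1=K_2$, $G_2=K_2^c$, the union has the simple adjacency eigenvalue $1$ with eigenvector $(1,1,0,0)^{\top}$, so it is not diagonalisable by any complex Hadamard matrix. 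The paper's proof has exactly the same defect --- it invokes Lemma~\ref{lem:tensor} with the $2\times2$ factors $\begin{bmatrix}1&0\\0&0\end{bmatrix}$ and $\begin{bmatrix}0&0\\0&1\end{bmatrix}$, neither of which is diagonalised by $\begin{bmatrix}1&1\\1&-1\end{bmatrix}$ --- so the union and join claims in the corollary hold only under the extra hypothesis $A(G_1)=A(G_2)$, a restriction that your step-2 characterisation actually makes transparent.
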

\begin{proof}
First observe 
\begin{equation*}
  \begin{bmatrix} H & H \\ H &-H\end{bmatrix}=\begin{bmatrix} 1&1\\1&-1\end{bmatrix} \otimes H.
\end{equation*}
The corollary follows from applying Lemma~\ref{lem:tensor} to
\begin{eqnarray*}
A(G_1+G_2) &=& \begin{bmatrix} 1 & 0 \\0&0 \end{bmatrix} \otimes A(G_1)+\begin{bmatrix} 0 & 0 \\0&1 \end{bmatrix} \otimes A(G_2), \\
A(G_1\vee G_2) &=& A(G_1+G_2) +  \begin{bmatrix}0&1\\1&0\end{bmatrix} \otimes \J, \quad \text{and}\\
A(G_1 \tensor[_{w_1}]{\odot}{_{w_2}} G_2)&=&w_1\begin{bmatrix} 1 & 0 \\0&1 \end{bmatrix} \otimes A(G_1) + w_2\begin{bmatrix}0&1\\1&0\end{bmatrix} \otimes A(G_2).
\end{eqnarray*}
\end{proof}

Given two graphs $G_1$ and $G_2$ on $n_1$ and $n_2$ vertices, respectively, their \emph{direct product} $G_1\times G_2$ is the graph with adjacency matrix $A(G_1)\otimes A(G_2)$, while their \emph{Cartesian product} $G_1\square G_2$ is a graph on
vertex set $V(G_1) \times V(G_2)$ with adjacency matrix
\begin{equation*}
A(G_1) \otimes \I_{n_2} + \I_{n_1} \otimes A(G_2).
\end{equation*}
A straightforward calculation gives the following result. 
\begin{proposition}\label{prop:SS8}
Suppose $G_1$ and $G_2$ are diagonalisable by complex Hadamard matrices $H_1$ and $H_2$, respectively.
Then $G_1\times G_2$ and $G_1\square G_2$ are diagonalisable by $H_1 \otimes H_2$. 
\end{proposition}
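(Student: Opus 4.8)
The plan is to reduce everything to the mixed-product property of the Kronecker product, namely $(A\otimes B)(C\otimes D)=(AC)\otimes(BD)$ together with $(A\otimes B)^*=A^*\otimes B^*$. First I would record the (standard) fact that $H_1\otimes H_2$ is itself a complex Hadamard matrix: its entries are products of entries of moduli one, hence have moduli one, and $(H_1\otimes H_2)(H_1\otimes H_2)^*=(H_1H_1^*)\otimes(H_2H_2^*)=(n_1\I)\otimes(n_2\I)=n_1n_2\I$. In particular $H_i^{-1}=\tfrac1{n_i}H_i^*$, and the hypothesis that $G_i$ is diagonalisable by $H_i$ means (since $G_i$ is then weighted--regular by Theorem~\ref{thm:regularity}) that both $\tfrac1{n_i}H_i^*A(G_i)H_i$ and $\tfrac1{n_i}H_i^*L(G_i)H_i$ are diagonal; write $D_i$ for the latter.

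Next I would conjugate directly. For the direct product, the mixed-product property gives
\[
(H_1\otimes H_2)^*\bigl(A(G_1)\otimes A(G_2)\bigr)(H_1\otimes H_2)=\bigl(H_1^*A(G_1)H_1\bigr)\otimes\bigl(H_2^*A(G_2)H_2\bigr),
\]
a Kronecker product of two diagonal matrices, hence diagonal; so $A(G_1\times G_2)$ — and, by weighted--regularity of $G_1\times G_2$, also $L(G_1\times G_2)$ — is diagonalised by $H_1\otimes H_2$. For the Cartesian product I would instead use the identity $L(G_1\square G_2)=L(G_1)\otimes\I_{n_2}+\I_{n_1}\otimes L(G_2)$ (which holds because the weighted degrees add); conjugating by $H_1\otimes H_2$ and using that $\I$ is diagonalised by any complex Hadamard matrix of the appropriate size yields $D_1\otimes\I_{n_2}+\I_{n_1}\otimes D_2$, a sum of two diagonal matrices, hence diagonal.

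There is no serious obstacle here — this is the ``straightforward calculation'' advertised before the statement — but the one point that warrants a sentence of care is the bookkeeping between the adjacency and Laplacian versions of ``diagonalisable by $H$''. Since diagonalisability of $G_i$ forces weighted--regularity, and both $G_1\times G_2$ and $G_1\square G_2$ are again weighted--regular (their common weighted degrees are $d_1d_2$ and $d_1+d_2$ respectively, where $d_i$ is the common weighted degree of $G_i$), the matrices $A$ and $L$ of each product are simultaneously diagonalised by the same matrix, so the two formulations coincide. Alternatively, one can phrase the whole argument in the outer-product language of the spaces $\mathcal S$ used earlier: $A(G_i)\in\operatorname{span}\{\mathbf h^{(i)}_k(\mathbf h^{(i)}_k)^*\}$, and since $\bigl(\mathbf h^{(1)}_k(\mathbf h^{(1)}_k)^*\bigr)\otimes\bigl(\mathbf h^{(2)}_\ell(\mathbf h^{(2)}_\ell)^*\bigr)=(\mathbf h^{(1)}_k\otimes\mathbf h^{(2)}_\ell)(\mathbf h^{(1)}_k\otimes\mathbf h^{(2)}_\ell)^*$ is exactly a column-outer-product of $H_1\otimes H_2$, every Kronecker combination of matrices drawn from the two spaces — in particular $A(G_1)\otimes A(G_2)$, $A(G_1)\otimes\I$, and $\I\otimes A(G_2)$, and hence $A(G_1\times G_2)$ and $A(G_1\square G_2)$ — lies in the span of those outer products, which is Lemma~\ref{lem:tensor} applied with the appropriate constituent graphs.
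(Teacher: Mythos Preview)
Your proposal is correct and is precisely the ``straightforward calculation'' the paper leaves to the reader: the mixed--product identity $(A\otimes B)(C\otimes D)=(AC)\otimes(BD)$ together with $(H_1\otimes H_2)^*=H_1^*\otimes H_2^*$ is all that is needed, and your care in reconciling the adjacency and Laplacian formulations via weighted--regularity is appropriate (indeed slightly more detail than the paper supplies).
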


\section{Hadamard and Turyn Hadamard Matrices}\label{sec:Turyn}
 
Recall that a complex Hadamard matrix $H$ is called a  \emph{Turyn Hadamard matrix} if its entries are in $\{\pm 1, \pm \ii\}$.   If a Turyn Hadamard matrix of order $n$ exists then $n$ is even, see \cite{Wallis} (compare this with the real case: if a Hadamard matrix of order $n$ exists then $n=2$ or $n \equiv 0 \pmod{4}$).

As a quick example, we note that Cayley graphs of $\ZZ_4^d$ are diagonalisable by the Turyn Hadamard matrix 
\begin{equation*}
\begin{bmatrix}
1&1&1&1\\
1&\ii&-1&-\ii\\
1&-1&1&-1\\
1&-\ii&-1&\ii
\end{bmatrix}^{\otimes d}.
\end{equation*}

\begin{example}
It follows from Corollary \ref{cor:small} (see below) and Proposition \ref{prop:SS8} that $K_2\square K_6$ is diagonalisable by a Turyn Hadamard matrix.
However, by Proposition 10 of \cite{BFK11}, $K_2\square K_6$ is not diagonalisable by a Hadamard matrix.

\end{example}

The graph partitioning problem concerns partitioning the vertices of a graph while minimising the size of cut edges. This problem, often with focus on  evenly balanced cuts, is connected to the Cheeger constant, 
 expander graphs,  
flow problems, and the volume of a subset of vertices in a graph (see, e.g.\  \cite{Chung}). The problem has also been considered in the context of computer science \cite{donath73}.

\begin{definition}  (Equitable  Partition) Let  $G$ be an unweighted graph.
A partition $\pi$ of the vertex set $V(G)=V_1\cup \dots \cup V_m$ is \emph{equitable}  if, for each $i,j \in \{1,2,\ldots,m\}$, there
exists a constant $v_{i,j}$ such that every vertex in $V_i$ has exactly $v_{i,j}$ neighbours in $V_j$.
\end{definition}

The \emph{quotient graph} $G/\pi$ is a directed integer-weighted graph on $m$ vertices $V_1, \dots, V_m$, with  $v_{i,j}$ being the arc weight of the arc from $V_i$ to $V_j$ (with $v_{i,j}=0$ corresponding to no arc from $V_i$ to $V_j$).  The (adjacency) quotient matrix corresponding to $G/\pi$ is then given by the $m\times m$ matrix $A(G/\pi)=[v_{i,j}]$. 

The second conclusion in \cite[Theorem 5]{BFK11} is that all the eigenvalues of the Laplacian of a Hadamard diagonalisable unweighted graph are even integers. The theorem below generalises this to Turyn Hadamard matrices (more generally, any complex Hadamard with a column whose entries are in $\{\pm 1, \pm \ii\}$), while Theorems \ref{thm:eigvalp} and \ref{thm:eigvaltwopower} give similar results for Butson  Hadamard matrices.

Given a dephased complex Hadamard matrix $H$,
let $G$ be an unweighted graph where
\begin{equation*}
L\bh_j = \lambda_j \bh_j
\qquad \text{for $j=1,\ldots,n$.} 
\end{equation*}
Permuting the columns of $H$ if necessary, we may assume that 
 $L(G) \bh_1 = 0 \bh_1$ ($\bh_1=\bone$), and that 
\begin{equation*}
0=\lambda_1\leq \lambda_2 \leq \cdots \leq \lambda_n.
\end{equation*}

Suppose $H$ has a column $\bh_k$ , $k>1$, whose entries are in $\{\pm 1, \pm \ii\}$.  We note that, for the following argument, we do not require all four values to appear in $\bh_k$.
Define the sets
\begin{eqnarray*}
\cR_{+} = \{j :\  \bh_k(j)=1\},
&\qquad & 
\cR_{-} = \{j :\  \bh_k(j)=-1\}, \\
\cI_{+} = \{j :\  \bh_k(j)=\ii\},
&\text{and} & 
\cI_{-} = \{j :\  \bh_k(j)=-\ii\}.
\end{eqnarray*}
As $\bh_k$ is orthogonal to $\bh_1=\bone$, we have
\begin{equation*} 
|\cR_{+}| = |\cR_{-}|
\quad \text{and} \quad
|\cI_{+}| = |\cI_{-}|.
\end{equation*}

For each vertex $u$, we use $\cR_{\pm}(u)$ and $\cI_{\pm}(u)$ to denote the set of neighbours of $u$ in
$\cR_{\pm}$ and $\cI_{\pm}$, respectively.
Note that, letting $d$ denote the degree of regularity, we have 
\begin{equation}
\label{degree}
|\cR_{+}(u)|+|\cR_{-}(u)|+|\cI_{+}(u)|+|\cI_{-}(u)| = d.
\end{equation}

For $u \in \cR_{+}$, the $u$--th entry of $(L(G) \bh_k)$ is
\begin{equation*}
d-|\cR_{+}(u)|+|\cR_{-}(u)|-|\cI_{+}(u)|\ii+|\cI_{-}(u)|\ii =  \lambda_k \cdot 1.
\end{equation*}
The imaginary part of the equation gives $|\cI_{+}(u)| = |\cI_{-}(u)|$.
Together with (\ref{degree}), we get
\begin{equation*}
|\cR_{-}(u)|+|\cI_{-}(u)| = \frac{\lambda_k}{2}.
\end{equation*}
For $u \in \cR_{-}$, the $u$--th entry of $(L(G) \bh_k)$ is
\begin{equation*}
-d-|\cR_{+}(u)|+|\cR_{-}(u)|-|\cI_{+}(u)|\ii+|\cI_{-}(u)|\ii = \lambda_k (-1),
\end{equation*}
together with (\ref{degree}), yields
\begin{equation*}
|\cI_{+}(u)| = |\cI_{-}(u)| 
\quad \text{and} \quad
|\cR_{+}(u)|+|\cI_{+}(u)| = \frac{\lambda_k}{2}.
\end{equation*}
Similarly, for $u \in \cI_{+}$,  we have
\begin{equation*}
d\ii -|\cR_{+}(u)|+|\cR_{-}(u)|-|\cI_{+}(u)|\ii+|\cI_{-}(u)|\ii = \lambda_k  \cdot \ii
\end{equation*}
which leads to
\begin{equation*}
|\cR_{+}(u)| = |\cR_{-}(u)|
\quad \text{and} \quad
|\cR_{-}(u)|+|\cI_{-}(u)| = \frac{\lambda_k}{2}.
\end{equation*}
Lastly, for $u \in \cI_{-}$, we have
\begin{equation*}
-d\ii-|\cR_{+}(u)|+|\cR_{-}(u)|-|\cI_{+}(u)|\ii+|\cI_{-}(u)|\ii =  \lambda_k (- \ii )
\end{equation*}
and
\begin{equation*}
|\cR_{+}(u)| = |\cR_{-}(u)|
\quad \text{and} \quad
|\cR_{+}(u)|+|\cI_{+}(u)| = \frac{\lambda_k}{2}.
\end{equation*}

We conclude that the partition $(\cR_{+} \cup \cI_{+}, \cR_{-}\cup \cI_{-})$ of the vertex set is equitable with quotient matrix 
\begin{equation*}
\begin{bmatrix}
d-\frac{\lambda_k}{2} & \frac{\lambda_k}{2}\\
\frac{\lambda_k}{2} & d- \frac{\lambda_k}{2}
\end{bmatrix}.
\end{equation*}

We summarise the above discussion in the following theorem:

\begin{theorem} \label{thm:epqmatrixeven}
Suppose $G$ is diagonalisable by a dephased complex Hadamard matrix $H$ of order $n$.
For each $\bh_k$, $k>1$, containing entries in $\{\pm1, \pm \ii\}$, $G$ has an equitable partition into two cells, each having exactly $\frac{n}{2}$ vertices, that has quotient matrix
\begin{equation*}
\begin{bmatrix}
d-\frac{\lambda_k}{2} & \frac{\lambda_k}{2}\\
\frac{\lambda_k}{2} & d- \frac{\lambda_k}{2}
\end{bmatrix}.
\end{equation*}
Moreover, $\lambda_k$ is an even integer.
\end{theorem}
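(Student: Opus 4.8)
The plan is to apply the eigenvector identity $L(G)\bh_k = \lambda_k\bh_k$ entrywise. By Lemma~\ref{lem:SS4} we may assume $H$ is dephased, so $\bh_1 = \bone$, and by Theorem~\ref{thm:regularity} the graph $G$ is regular with some degree $d$, whence $L(G) = d\I - A(G)$. First I would partition $V(G)$ into the four classes $\cR_+,\cR_-,\cI_+,\cI_-$ on which $\bh_k$ takes the values $1,-1,\ii,-\ii$ respectively, some of which may be empty; orthogonality of $\bh_k$ to $\bone$ gives $|\cR_+|=|\cR_-|$ and $|\cI_+|=|\cI_-|$, so that both $\cR_+\cup\cI_+$ and $\cR_-\cup\cI_-$ have exactly $n/2$ vertices.

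The core computation is as follows. For a vertex $u$, the $u$-th entry of $L(G)\bh_k$ equals $d\,\bh_k(u) - \bigl(|\cR_+(u)| - |\cR_-(u)|\bigr) - \ii\bigl(|\cI_+(u)| - |\cI_-(u)|\bigr)$, and this must equal $\lambda_k\bh_k(u)$. I would run this in each of the four cases $u\in\cR_+$, $u\in\cR_-$, $u\in\cI_+$, $u\in\cI_-$, in each case comparing the coefficients of $1$ and of $\ii$ and invoking the degree identity \eqref{degree}. Every case collapses to the single conclusion that $u$ has exactly $\lambda_k/2$ neighbours in the cell that does not contain it (and hence $d - \lambda_k/2$ neighbours in its own cell). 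That is exactly the assertion that $(\cR_+\cup\cI_+,\ \cR_-\cup\cI_-)$ is an equitable partition whose quotient matrix is the one displayed in the statement.

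For the parity claim, I would simply observe that in, say, the case $u\in\cR_+$ the number $\lambda_k/2$ is produced as $|\cR_-(u)| + |\cI_-(u)|$, which is a sum of two cardinalities and therefore a non-negative integer; combined with $\lambda_k \ge 0$ from the chosen ordering of eigenvalues, this makes $\lambda_k$ an even integer.

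I do not anticipate a genuine obstacle: the whole argument is bookkeeping with the coefficients of $1$ and $\ii$. The only points needing attention are (i) checking that the neighbour count $\lambda_k/2$ is constant in \emph{both} directions between the two cells — this is what upgrades a balanced cut to an equitable partition, and it is why all four cases (not merely $u\in\cR_+$) must be treated — and (ii) making sure the identities degenerate correctly when $\bh_k$ omits one or more of the values $\pm1,\pm\ii$, so that some of $\cR_\pm,\cI_\pm$ are empty.
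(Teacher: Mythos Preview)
Your proposal is correct and follows essentially the same route as the paper: partition $V(G)$ into $\cR_+,\cR_-,\cI_+,\cI_-$, use orthogonality to $\bone$ for the cell sizes, compare real and imaginary parts of the $u$-th entry of $L(G)\bh_k=\lambda_k\bh_k$ in each of the four cases together with the degree identity~\eqref{degree}, and read off both the quotient matrix and the evenness of $\lambda_k$ from the resulting neighbour counts. The only cosmetic difference is that you package the entrywise computation via $L(G)=d\I-A(G)$ in one formula before splitting into cases, whereas the paper writes out each case separately.
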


Further, we define two vectors $\bh^{\cR}_k$ and $\bh^{\cI}_k$ as follows
\begin{equation*}
\bh^{\cR}_k(j) = 
\begin{cases}
\bh_k(j) & \text{if $\bh_k(j) = \pm 1$}\\
0 & \text{otherwise}
\end{cases}
\quad \text{and} \quad
\bh^{\cI}_k(j) = 
\begin{cases}
\bh_k(j) & \text{if $\bh_k(j) = \pm \ii$}\\
0 & \text{otherwise}.
\end{cases}
\end{equation*}
Then each of $\bh^{\cR}_k$ and $\bh^{\cI}_k$  is either the zero vector or is an eigenvector of $L(G)$ corresponding to  $\lambda_k$.
If $\bh_j$ is not orthogonal to both $\bh^{\cR}_k$ and $\bh^{\cI}_k$ then $\lambda_j=\lambda_k$.

\begin{corollary}
\label{cor:TurynEven}
Suppose $G$ is diagonalisable by a normalised Hadamard matrix or a dephased Turyn Hadamard matrix of order $n$.
For each $k>1$, $G$ has an equitable partition into two cells, each having exactly $\frac{n}{2}$ vertices, that has quotient matrix
\begin{equation*}
\begin{bmatrix}
d-\frac{\lambda_k}{2} & \frac{\lambda_k}{2}\\
\frac{\lambda_k}{2} & d- \frac{\lambda_k}{2}
\end{bmatrix}.
\end{equation*}
Moreover, the eigenvalues of $L(G)$ are even integers.
\qed
\end{corollary}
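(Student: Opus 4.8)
The plan is to derive Corollary~\ref{cor:TurynEven} as a direct specialisation of Theorem~\ref{thm:epqmatrixeven}. The key observation is that a normalised (real) Hadamard matrix has every entry in $\{\pm 1\}\subset\{\pm 1,\pm\ii\}$, and a dephased Turyn Hadamard matrix has, by definition, every entry in $\{\pm 1,\pm\ii\}$. In either case, \emph{every} column $\bh_k$ of $H$ satisfies the hypothesis of Theorem~\ref{thm:epqmatrixeven} (not merely some columns), so the hypothesis ``$\bh_k$ containing entries in $\{\pm 1,\pm\ii\}$'' is automatically met for all $k$.

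First I would recall the setup preceding Theorem~\ref{thm:epqmatrixeven}: after permuting columns we may take $\bh_1=\bone$ and $0=\lambda_1\le\lambda_2\le\cdots\le\lambda_n$, and $G$ is weighted--regular (here unweighted, so $d$--regular) by Theorem~\ref{thm:regularity}. Then for each fixed $k$ with $2\le k\le n$, I would simply invoke Theorem~\ref{thm:epqmatrixeven}: since $\bh_k$ has all entries in $\{\pm 1,\pm\ii\}$, the partition $(\cR_+\cup\cI_+,\ \cR_-\cup\cI_-)$ induced by $\bh_k$ is equitable, each cell has $\frac n2$ vertices, the quotient matrix is
\begin{equation*}
\begin{bmatrix}
d-\frac{\lambda_k}{2} & \frac{\lambda_k}{2}\\
\frac{\lambda_k}{2} & d-\frac{\lambda_k}{2}
\end{bmatrix},
\end{equation*}
and $\lambda_k$ is an even integer. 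Finally, since this holds for every $k>1$ and $\lambda_1=0$ is trivially an even integer, all eigenvalues of $L(G)$ are even integers, completing the proof.

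There is essentially no obstacle here: the corollary is a bookkeeping consequence of the theorem once one notes that normalised Hadamard and dephased Turyn Hadamard matrices have \emph{all} columns of the required form. The only mild point worth a sentence is that ``normalised'' for a real Hadamard matrix and ``dephased'' for a Turyn matrix both force $\bh_1=\bone$, matching the indexing convention $0=\lambda_1\le\cdots\le\lambda_n$; everything else is immediate from Theorem~\ref{thm:epqmatrixeven}. For this reason the statement is given with a $\qed$ and no written-out proof, and I would either leave it so or include only the two-line remark above.
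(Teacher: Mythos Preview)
Your proposal is correct and matches the paper's approach exactly: the corollary is stated with a \qed{} and no proof precisely because it is an immediate specialisation of Theorem~\ref{thm:epqmatrixeven}, using only the observation that every column of a normalised Hadamard or dephased Turyn Hadamard matrix has entries in $\{\pm 1,\pm\ii\}$. Your added remarks about $\bh_1=\bone$ and $\lambda_1=0$ being trivially even are accurate and would be appropriate if one chose to spell things out, but the paper (rightly) omits them.
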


\begin{remark}
If $H$ is a normalised Hadamard matrix or a dephased Turyn Hadamard matrix, then
every column of $H$, except for the first one, gives an equitable partition of $G$.

Further, if $H$ is a Hadamard matrix, then these $(n-1)$ equitable partitions are distinct,
and 
the intersection of the equitable partitions obtained from $\bh_j$ and $\bh_k$,
for $2\leq j <k\leq n$,
consists of four cells of equal size.
\end{remark}

\subsection{Small Examples}

Recall that a conference matrix $C$ of order $n$ is a symmetric or antisymmetric matrix with zeros on the diagonal, $\pm1$'s on the off--diagonals, and satisfies $C^{\top} C=(n-1)\I$.  

\begin{example}
Let $C$ be a real symmetric conference matrix of order $n$.  We obtain the matrix $H$ by dephasing the Turyn Hadamard matrix $\I+\ii C$, see \cite{Szollosi}.
Then $H$ has the form
\begin{equation*}
\begin{bmatrix}
1 & 1 & 1 & 1 &  \cdots & 1\\
1 & -1 & \pm \ii & \pm \ii & \cdots & \pm \ii \\
1 & \pm \ii &\ddots &&&\\
1 & \pm \ii & &\ddots&&\\
\vdots & \vdots &&&\ddots&\\
1 & \pm \ii & &&&\ddots\\
\end{bmatrix}
\end{equation*}
Since $\bh^{\cR}_2$ is not orthogonal to $\bh_k$, for $k >2$.  We conclude that 
\begin{equation*}
\lambda_2=\lambda_3=\cdots = \lambda_n
\end{equation*}
 and $K_n$ is the only graph diagonalisable by $H$.
\end{example}
\begin{theorem}\label{thm:oddunion}
Let $k$ be a positive integer and let $G_j$, $j=1, \dots, 2k+1$, be unweighted connected graphs of order $n$. The graph $G=G_1+\cdots +G_{2k+1}$ is not diagonalisable by a Turyn Hadamard matrix, regardless of whether or not $G_j$ is for some $j$. 
\end{theorem}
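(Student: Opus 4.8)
The key structural fact to exploit is Corollary~\ref{cor:TurynEven}: if $G$ is diagonalisable by a dephased Turyn Hadamard matrix $H$ of order $N = (2k+1)n$, then every column $\bh_j$ with $j>1$ splits the vertex set into two cells of equal size, namely $N/2$ vertices each. In particular $N$ must be even. Since $G = G_1 + \cdots + G_{2k+1}$ is a disjoint union of an odd number of connected graphs each of order $n$, we have $N = (2k+1)n$, so $N$ is even if and only if $n$ is even. This immediately disposes of the case where $n$ is odd. So first I would record that observation and reduce to the case $n$ even.

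For $n$ even, I would argue by looking at how an equitable partition of $G$ into two cells interacts with the connected components $G_1, \dots, G_{2k+1}$. The crucial point is that for each column $\bh_k$ the vectors $\bh_k^{\cR}$ and $\bh_k^{\cI}$ defined before Corollary~\ref{cor:TurynEven} are eigenvectors of $L(G)$ (or zero), and since $L(G)$ is block diagonal with blocks $L(G_j)$, each $G_j$ is itself invariant under the partition structure. More concretely: restricted to the vertex set of a single connected component $G_j$, the entries of $\bh_k$ take values in $\{\pm 1, \pm \ii\}$, and the equitable-partition equations derived in the run-up to Theorem~\ref{thm:epqmatrixeven} force, on each $G_j$ separately, the relation $|\cR_+ \cap V(G_j)| + |\cI_+ \cap V(G_j)| = |\cR_- \cap V(G_j)| + |\cI_- \cap V(G_j)|$ — i.e. each component contributes equally to the two cells of the bipartition — unless $\bh_k$ is constant on $V(G_j)$. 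The reason is that $G_j$ is connected: if the two cells of the induced partition on $V(G_j)$ had different sizes, propagating the equitable-partition constraints along a spanning tree of $G_j$ would yield a contradiction with $L(G_j)$ having the all-ones vector (restricted to $V(G_j)$) as its unique null vector, or more directly, a non-constant Laplacian eigenvector supported on a connected graph cannot be "locally balanced" on a component in the required way while the global count over that component is odd. So on each $G_j$, either $\bh_k|_{V(G_j)}$ is constant, or $|V(G_j)| = n$ is split evenly.

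Now sum over the $2k+1$ components. Let $S$ be the set of indices $j$ for which $\bh_k$ is non-constant on $V(G_j)$; on those components the contribution to $\cR_+ \cup \cI_+$ is $n/2$, and on the remaining components the contribution is either $0$ or $n$ (according to the constant value). For the global bipartition to be balanced we need the total to equal $N/2 = (2k+1)n/2$, which forces the number of "constant at value in $\cR_+ \cup \cI_+$" components minus the number "constant at value in $\cR_- \cup \cI_-$" to be zero — fine so far. The contradiction I would then drive comes from considering a column $\bh_k$ and the orthogonality $\bh_k \perp \bone$: working component-by-component, on the components where $\bh_k$ is constant, that constant is a root of unity from $\{1, -1, \ii, -\ii\}$, and the sum of these $2k+1$-minus-$|S|$ constants plus the sum over the non-constant components must be zero. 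But the entries of $H$ lie in $\{\pm 1, \pm \ii\}$ and orthogonality of all pairs of columns of $H$, combined with the block structure, forces (after counting dimensions: the space of matrices diagonalisable by $H$ and block-diagonal with respect to the component partition, intersected with the constraint that it contains $L(G)$) an impossible parity condition on $2k+1$. The cleanest route is probably: each $G_j$ connected and diagonalisable-compatible means $L(G_j)$ lies in a Turyn-diagonalisable algebra only if $n$ itself admits such structure, and the "odd number of copies glued by disjoint union" obstructs closing up the eigenbasis into a Turyn matrix of order $(2k+1)n$ — the tensor/direct-sum bookkeeping does not match.

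**The main obstacle** will be making the component-by-component balancing argument fully rigorous: specifically, proving that on each connected component $G_j$ a column $\bh_k$ is either constant or splits $V(G_j)$ exactly in half. The equitable-partition identities derived before Theorem~\ref{thm:epqmatrixeven} are stated globally, and I will need to localise them to a single component using connectedness (propagating along a spanning tree) and the fact that $L(G_j)$'s only constant eigenvector is $\bone|_{V(G_j)}$ with eigenvalue $0$. Once that localisation is in hand, the parity obstruction from summing over an odd number $2k+1$ of components of odd-or-even size is a short counting argument; but the localisation step is where the real work lies, and I would expect the authors to handle it via the structure of $\bh_k^{\cR}$ and $\bh_k^{\cI}$ as genuine Laplacian eigenvectors on the block-diagonal $L(G)$.
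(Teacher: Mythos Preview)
Your proposal misses the key idea and, as written, does not reach a contradiction. The equitable--partition analysis you outline for columns $\bh_k$ with $\lambda_k \neq 0$ is internally consistent: on each connected component $G_j$ the restriction $\bh_k|_{V(G_j)}$ is an eigenvector of $L(G_j)$ for the \emph{nonzero} eigenvalue $\lambda_k$, hence is never constant (constant vectors lie in the kernel), hence is orthogonal to $\bone|_{V(G_j)}$ and so splits $V(G_j)$ exactly in half. Summing over all $2k+1$ components gives $(2k+1)\cdot n/2 = N/2$ on each side, which matches Corollary~\ref{cor:TurynEven} perfectly --- no parity obstruction arises here. Your subsequent appeals to ``dimension counting'' and ``tensor/direct--sum bookkeeping'' are too vague to constitute an argument.

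The paper's proof is both shorter and conceptually different: it looks \emph{only} at the null space of $L(G)$. Since $G$ has $2k+1$ connected components, the eigenspace for $0$ has dimension $2k+1$, so exactly $2k+1$ columns of $H$ lie in it. Each such column is constant on every component (by connectedness), hence has the form $(a_{j,1},\ldots,a_{j,2k+1})^\top \otimes \bone_n$ with all $a_{j,i}\in\{\pm 1,\pm\ii\}$. Mutual orthogonality of these $2k+1$ columns of $H$ then forces the $(2k+1)\times(2k+1)$ matrix $[a_{j,i}]$ to be a Turyn Hadamard matrix of odd order $2k+1$, which is impossible. You brushed past the constant--on--components case as an exception (``unless $\bh_k$ is constant on $V(G_j)$''), but that is precisely where the whole argument lives.
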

\begin{proof}
Suppose $L(G)$ is diagonalisable by a complex Hadamard matrix $H$.  The eigenspace of $L(G)$ for the eigenvalue $0$ is spanned by $(2k+1)$ columns of $H$ of the form
\begin{equation*}
\bh_j=(a_{j,1}, a_{j,2}, \ldots, a_{j,2k+1})^{\top}  \otimes \bone_n,
\quad \text{for $j=1,\ldots, 2k+1$.}
\end{equation*}
The vectors $\bh_1, \ldots, \bh_{2k+1}$ are mutually orthogonal, therefore
the matrix 
\begin{equation*}
\begin{bmatrix}
a_{1,1} & a_{1,2} & \cdots & a_{1, 2k+1} \\
a_{2,1} & a_{2,2} & \cdots & a_{2, 2k+1} \\
\vdots & & \ddots &\\
a_{2k+1, 1} & a_{2k+1, 2} & \cdots & a_{2k+1, 2k+1}  
\end{bmatrix}
\end{equation*}
is a complex Hadamard matrix of order $2k+1$.  We conclude  that $H$ is neither a Turyn Hadamard matrix nor a Hadamard matrix.
\end{proof}

\begin{corollary}\label{cor:small}
All the unweighted graphs on   $8$ or fewer vertices that are diagonalisable by a Turyn Hadamard matrix or Hadamard matrix are listed below:
\begin{itemize}
    \item Order 2: $K_2$ and $K_2^c$;
    \item Order 4: $K_4$, $C_4$, $K_2+K_2$, and $K_4^c$;
    \item Order 6: $K_6$, $K_6^c$;
    \item Order 8: $K_8$, $K_{2,2,2,2}$, 
    $(C_4+C_4)^c$, $(K_{2,2}\square K_2)^c$, $K_{4,4}$, $K_4+K_4$, $K_{2,2}\square K_2$, $C_4+C_4$, and $K_8^c$.
\end{itemize}
\end{corollary}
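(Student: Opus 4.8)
The plan is to enumerate, for each admissible order $n \in \{2,4,6,8\}$, all unweighted graphs whose Laplacian is diagonalisable by a (real) Hadamard matrix or a Turyn Hadamard matrix, using the structural constraints already assembled in this section. The principal tool is Corollary~\ref{cor:TurynEven} together with Theorem~\ref{thm:epqmatrixeven}: if $G$ on $n$ vertices is diagonalisable by such an $H$, then (i) $G$ is weighted--regular (Theorem~\ref{thm:regularity}), hence $d$--regular for some integer $d$; (ii) every Laplacian eigenvalue is an even integer; and (iii) every column $\bh_k$ of $H$ with $k>1$ induces an equitable bipartition of $V(G)$ into two cells of size $n/2$ with the displayed circulant quotient matrix. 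For $n=2$ only $K_2$ and $K_2^c$ exist and both are trivially diagonalisable by $\begin{bmatrix}1&1\\1&-1\end{bmatrix}$, and Theorem~\ref{thm:oddunion} already rules out disconnected graphs with an odd number of connected components of equal order (this immediately eliminates, e.g., $3K_2$ at order $6$).

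For the bulk of the argument I would proceed order by order. At each order, list all $d$-regular graphs (up to isomorphism) with $d$ ranging over $0,1,\dots,n-1$, and for each one compute the Laplacian spectrum; discard any graph with a Laplacian eigenvalue that is not an even integer. This is already very restrictive: at order $4$ the $2$-regular graph $C_4$ has spectrum $\{0,2,2,4\}$ (survives), while $K_4$ has $\{0,4,4,4\}$ and $2K_2$ has $\{0,0,2,2\}$ (both survive), but the remaining regular graphs on $4$ vertices ($K_4^c=\overline{K_4}$ with spectrum $\{0,0,0,0\}$ survives as well), and one checks directly these four exhaust order~$4$. At order~$6$ the only $d$-regular graphs whose Laplacian spectrum is entirely even integers turn out to be $K_6$ and $\overline{K_6}$: for instance $C_6$ has eigenvalues $1$ and $3$ appearing, $K_{3,3}$ has $\{0,3,3,3,3,6\}$, the prism $K_3\square K_2$ has a $1$ in its spectrum, and $2K_3$ has $3$'s — so all non-complete, non-empty candidates die on the parity test, and one must also confirm $K_6$ and $\overline{K_6}$ are genuinely realisable (the character table of $\mathbb{Z}_6$ gives a complex Hadamard matrix, but for a \emph{Turyn or real Hadamard} matrix one uses that a real Hadamard matrix of order~$6$ does not exist yet a Turyn Hadamard matrix of order~$6$ does, e.g.\ $\I + \ii C$ for a conference matrix $C$ of order~$6$, and $K_6 = \text{span}\{\I,\J\}$ lies in the diagonalised algebra while $\overline{K_6}$ follows by the complement proposition).

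At order~$8$ the parity-of-eigenvalues filter plus the equitable-partition constraint still leaves a short list, and here I would additionally invoke the merge/union/tensor machinery (Lemma~\ref{lem:tensor}, Proposition~\ref{prop:SS8}, the join/union corollary) for the \emph{existence} half: $K_8$, $K_{4,4}$, $K_4 + K_4$, $C_4+C_4$, $K_{2,2}\square K_2$ are all built from order-$4$ or order-$2$ pieces already known to be Hadamard/Turyn diagonalisable (e.g.\ $K_{2,2}\square K_2 = C_4 \square K_2$ via Proposition~\ref{prop:SS8}, and $K_{2,2,2,2}$ as a join of smaller diagonalisable graphs), and their complements $\overline{K_8}$, $(C_4+C_4)^c$, $(K_{2,2}\square K_2)^c$ follow from the complement proposition. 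For the \emph{uniqueness} half at order~$8$ one checks that every $d$-regular graph on $8$ vertices with all-even Laplacian spectrum admitting the required family of size-$4$ equitable bipartitions is isomorphic to one on the list; the main obstacle is precisely this exhaustive verification — there are a fair number of regular graphs on $8$ vertices, so I would organise the check by the degree $d$ and by the multiset of even eigenvalues, noting that having a full Hadamard-matrix eigenbasis forces, via the Remark after Corollary~\ref{cor:TurynEven}, that the $n-1 = 7$ equitable partitions intersect four at a time into cells of equal size, which is a strong combinatorial restriction that kills the stray candidates (such as the $3$-cube $Q_3$, whose spectrum $\{0,2,2,2,4,4,4,6\}$ passes parity but which is $C_4 \square K_2$'s relative — in fact $Q_3 = K_2^{\square 3}$ \emph{is} Hadamard diagonalisable and should appear; I would double-check whether the intended list implicitly includes it under another name or whether an extra hypothesis excludes it, reconciling with Corollary~1 of \cite{JKPSZ17}). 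I expect the write-up to be a finite but somewhat lengthy case analysis whose only genuinely delicate point is ensuring the order-$8$ enumeration is complete.
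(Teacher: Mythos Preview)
Your overall strategy---regularity plus the even-eigenvalue constraint from Corollary~\ref{cor:TurynEven}, then a case check by order---matches the paper's. However, there is a genuine error in your order~$6$ analysis. You assert that among regular graphs on six vertices ``all non-complete, non-empty candidates die on the parity test,'' but this is false: the octahedron $K_{2,2,2}$ has Laplacian spectrum $\{0,4,4,4,6,6\}$ and its complement $3K_2$ has spectrum $\{0,0,0,2,2,2\}$, so both survive parity. The paper handles this correctly by listing the four survivors ($K_6$, $K_{2,2,2}$, $3K_2$, $K_6^c$) and then invoking Theorem~\ref{thm:oddunion} to eliminate $3K_2$ \emph{and} its complement $K_{2,2,2}$. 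You mention Theorem~\ref{thm:oddunion} in passing for $3K_2$ but never use it to kill $K_{2,2,2}$, and your parity claim as stated is simply incorrect.

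Two smaller points. First, your worry about $Q_3$ at order~$8$ is unfounded: $K_{2,2}=C_4$, so $K_{2,2}\square K_2$ \emph{is} the $3$-cube and is already on the list. Second, for order~$8$ the paper does not carry out the exhaustive verification you propose; it simply cites \cite{BFK11}, where it is established that every regular graph on eight vertices with all Laplacian eigenvalues even is in fact Hadamard diagonalisable, and the list is given there. Your proposed equitable-partition case analysis could be made to work, but it is substantially more labour than necessary, and you yourself flag uncertainty about completing it.
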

\begin{proof}
By Corollary~\ref{cor:TurynEven}, the graphs diagonalisable by a Turyn Hadamard matrix or a Hadamard matrix must be regular and with only even Laplacian eigenvalues.
All regular graphs on two or four vertices are Hadamard diagonalisable.
Among the  regular graphs on six vertices that are listed in \cite[Observation 3]{BFK11}, only $K_6$, $K_{2,2,2}$, $K_2+K_2+K_2$, and $K_6^c$ have all eigenvalues even.  By Theorem~\ref{thm:oddunion}, we rule out $K_2+K_2+K_2$ and its complement $K_{2,2,2}$. 
For order $8$, all regular graphs with only even eigenvalues are Hadamard diagonalisable; such graphs are listed on Page~1892 of  \cite{BFK11}.
\end{proof}

\subsection{Cheeger Constant}
The Cheeger constant of a  set $S$ of vertices in an unweighted   graph $G$ is
\begin{equation*}
h_G(S) = \frac{|E(S, V(G)-S)|}{\min(\sum_{x\in S} deg(x), \sum_{y \not \in S} deg(y))},
\end{equation*}
where $|E(S, V(G)-S)|$  is the number of edges in the edge--cut $(S, V-S)$.
The {\em Cheeger constant} of $G$ is
\begin{equation*}
h_G = \min_{S \subset V(G)} h_G(S).
\end{equation*}
The Cheeger inequality  states that \cite[Chapter~2]{Chung}
\begin{equation}\label{Cheegerineq}
\frac{\gamma_2}{2} \leq h_G \leq \sqrt{2\gamma_2},
\end{equation}
where $\gamma_2$ is the second smallest eigenvalue of the normalised Laplacian matrix, $\cL(G)=D^{-1/2}LD^{1/2}$, of $G$.

Since $G$ is $d$--regular, its normalised Laplacian matrix $\cL(G)$ equals $\frac{1}{d} L(G)$ and the second smallest eigenvalue of $\cL(G)$ is $\gamma_2=\frac{\lambda_2}{d}$.

If  $G$ is diagonalisable by a complex Hadamard matrix and the entries of $\bh_2$ are in $\{\pm 1, \pm \ii\}$, then the Cheeger constant of the set $S = \cR_{+} \cup \cI_{+}$ in $G$ is
\begin{equation*}
h_G(S) = \frac{ | S| \frac{\lambda_2}{2}}{|S| d} = \frac{\gamma_2}{2}.
\end{equation*}
Hence the lower bound on the Cheeger  constant $h_G \geq \frac{\gamma_2}{2}$ is tight, see \cite{Chung}.

\begin{proposition} \label{cheeger}
If $L(G)$ is diagonalisable by a Hadamard matrix or a Turyn Hadamard matrix, then 
\begin{equation*}
h_G = \frac{\gamma_2}{2}.
\end{equation*}
\end{proposition}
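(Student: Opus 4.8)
The plan is to sandwich $h_G$ between the universal Cheeger lower bound and an explicit upper bound coming from the equitable partition that Theorem~\ref{thm:epqmatrixeven} attaches to the column $\bh_2$. First I would reduce to the normalised/dephased situation: by Lemma~\ref{lem:SS4} we may assume $H$ is dephased, and I would note that this reduction preserves the hypothesis, since dephasing a real Hadamard matrix by $\pm1$ scalings keeps it real, and dephasing a Turyn Hadamard matrix by fourth--root--of--unity scalings keeps every entry in $\{\pm1,\pm\ii\}$. After permuting the columns of $H$ we may take $\bh_1=\bone$ and $0=\lambda_1\le\lambda_2\le\cdots\le\lambda_n$. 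Since $G$ is $d$--regular by Theorem~\ref{thm:regularity}, we have $\cL(G)=\tfrac1d L(G)$, hence $\gamma_2=\lambda_2/d$, exactly as recorded before the statement.

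For the lower bound, I would simply invoke the left--hand inequality in the Cheeger inequality~(\ref{Cheegerineq}), which gives $h_G\ge \gamma_2/2$ with no further work. For the matching upper bound I would apply Theorem~\ref{thm:epqmatrixeven} with $k=2$: the entries of $\bh_2$ lie in $\{\pm1,\pm\ii\}$ (they are all $\pm1$ in the Hadamard case, and a subset of $\{\pm1,\pm\ii\}$ in the Turyn case), so $G$ has an equitable partition into the two cells $\cR_{+}\cup\cI_{+}$ and $\cR_{-}\cup\cI_{-}$, each of size $n/2$, with quotient matrix having off--diagonal entry $\lambda_2/2$. Taking $S=\cR_{+}\cup\cI_{+}$, every vertex of $S$ has exactly $\lambda_2/2$ neighbours outside $S$, so $|E(S,V(G)\setminus S)|=\tfrac n2\cdot\tfrac{\lambda_2}{2}$, while $\sum_{x\in S}\deg(x)=\sum_{y\notin S}\deg(y)=\tfrac n2 d$; therefore $h_G(S)=\tfrac{\lambda_2}{2d}=\tfrac{\gamma_2}{2}$, and a fortiori $h_G\le \gamma_2/2$. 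Combining the two bounds gives $h_G=\gamma_2/2$.

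There is no substantial obstacle here — essentially every ingredient is already in place in the excerpt — so the work is in bookkeeping. The two points I would be careful about are: (i) verifying, as above, that passing to a dephased matrix does not take us out of the classes ``real Hadamard'' or ``Turyn Hadamard'' (so that Theorem~\ref{thm:epqmatrixeven} and the preceding Cheeger computation apply to the specific column $\bh_2$); and (ii) the degenerate case when $G$ is disconnected, where $\lambda_2=\gamma_2=0$ and the conclusion $h_G=0$ still holds because the set $S$ above (equivalently, any union of connected components) witnesses $h_G(S)=0$. Neither point requires more than a line.
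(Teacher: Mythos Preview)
Your proposal is correct and is essentially the same argument the paper gives in the paragraph immediately preceding the proposition: exhibit the set $S=\cR_{+}\cup\cI_{+}$ coming from the column $\bh_2$, compute $h_G(S)=\gamma_2/2$ via the equitable partition quotient matrix, and combine with the Cheeger lower bound $h_G\ge\gamma_2/2$. The extra bookkeeping you add (dephasing stays within the Hadamard/Turyn class, and the disconnected case) is sound and not addressed explicitly in the paper.
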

 
   In fact, both of the inequalities in (\ref{Cheegerineq}) are tight for some families of graphs: The inequality $h_G \geq \frac{\gamma_2}{2}$ is tight for all cubelike graphs. Hypercubes, which are a family of cubelike graphs, are such examples. The inequality $h_G \leq \sqrt{2\gamma_2}$ is tight for even cycles within constant factors; see  \cite[Chapter~16]{Luca} for both cases.

 Related to the Cheeger constant is a notion of {\em edge density}, which appeared in \cite{FKP}. For any subset $S$ of vertices from a graph $G$, we let
 \[ \rho(S) = \frac{|V(G)| |E(S, V(G)-S)|}{|S||V(G)-S|},\]
 and refer to $\rho(S)$ as the edge density of $S$ in the graph $G$. The minimum edge density is defined to be 
 \[  \min_{S \subset V(G)} \rho(S), \]
 and is known to be an upper bound for the algebraic connectivity (the second smallest   Laplacian eigenvalue) of $G$.
 It is not difficult to observe that for any graph $G$ satisfying the hypothesis of Proposition \ref{cheeger} also satisfies 
 \begin{equation}\label{cheq}
     \lambda_2 = \min_{S \subset V(G)} \rho(S).
 \end{equation}
 That is, the algebraic connectivity of $G$ is equal to the minimum edge density of $G$. In fact, for any such graph $G$, since $G$ is $d$--regular and when $S=\cR_{+} \cup \cI_{+}$ it follows that $2dh_G(S)=\rho(S)$. Therefore, 
 \[ \lambda_2 = 2dh_G = 2dh_G(S) = \rho(S), \]
 which verifies (\ref{cheq}). Furthermore, any such graph will also satisfy additional regularity constraints (see the  conditions labelled (A) and (B) in \cite{FKP}), which are equivalent to the resulting quotient matrix having the form as given in Theorem \ref{thm:epqmatrixeven} where $\lambda_k$ refers to the algebraic connectivity of $G$.

 \section{Butson Hadamard matrices}\label{sec:Butson}
 
Recall that a Butson Hadamard matrix $H\in H(r,n)$ is an $n\times n$ matrix satisfying $HH^*=n\I$ with all entries being $r$--th roots of unity.

\begin{lemma}
\label{lem:partition}
Let $\zeta=e^{\frac{2\pi \iu}{r}}$, for some positive integer $r$, and $\sum_{j=0}^{r-1}a_j\zeta^j = \lambda \in \QQ$.
If $r$ is a prime number, then $a_1=\cdots =a_{r-1}$.  If $r=2^m$, for some positive integer $m$, then $a_j=a_{\frac{r}{2}+j}$ for $j=1,\ldots, \frac{r}{2}-1$.
\end{lemma}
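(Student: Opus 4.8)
The plan is to use the theory of cyclotomic fields. Let $\zeta = e^{2\pi\iu/r}$ and consider the relation $\sum_{j=0}^{r-1} a_j\zeta^j = \lambda$ with $a_j \in \ZZ$ (or $\QQ$) and $\lambda \in \QQ$. Rewriting, $\sum_{j=1}^{r-1} a_j\zeta^j = \lambda - a_0 \in \QQ$, so it suffices to understand $\QQ$-linear relations among $1, \zeta, \ldots, \zeta^{r-1}$. The key fact is that $[\QQ(\zeta):\QQ] = \varphi(r)$, so the minimal polynomial of $\zeta$ over $\QQ$ is the $r$-th cyclotomic polynomial $\Phi_r(x)$, which has degree $\varphi(r)$; equivalently, the kernel of the evaluation map $\QQ[x]_{<r} \to \QQ(\zeta)$, $p \mapsto p(\zeta)$, is exactly the ideal generated by $\Phi_r(x)$ inside $\QQ[x]_{<r}$, i.e. it is spanned by $\Phi_r(x), x\Phi_r(x), \ldots, x^{r-1-\varphi(r)}\Phi_r(x)$.

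First I would handle the prime case. If $r = p$ is prime, then $\Phi_p(x) = 1 + x + \cdots + x^{p-1}$, which has degree $p-1 = r-1$, so the space of relations $\sum_{j=0}^{r-1} b_j \zeta^j = 0$ is one-dimensional, spanned by $(1,1,\ldots,1)$. Now $\sum_{j=0}^{r-1} a_j \zeta^j - \lambda$ is such a relation (write $\lambda \cdot 1$ as the coefficient-$\lambda$ term in degree $0$), so $(a_0 - \lambda, a_1, a_2, \ldots, a_{r-1})$ is a scalar multiple of $(1,1,\ldots,1)$; in particular $a_1 = a_2 = \cdots = a_{r-1}$, as claimed.

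Next I would do the case $r = 2^m$. Here $\Phi_{2^m}(x) = x^{2^{m-1}} + 1$, of degree $2^{m-1} = r/2$. The relation module (relations $\sum b_j\zeta^j = 0$ with $b \in \QQ^r$) is therefore spanned by the $r/2$ polynomials $x^i(x^{r/2}+1)$ for $i = 0, 1, \ldots, r/2 - 1$, i.e. by the vectors $\be_i + \be_{r/2+i}$ for $i = 0,\ldots,r/2-1$, where $\be_\ell$ is the standard basis vector. Since $\sum_{j=0}^{r-1} a_j\zeta^j - \lambda = 0$ is a relation, the vector $(a_0 - \lambda, a_1, \ldots, a_{r-1})$ lies in this span, so it has the form $\sum_{i=0}^{r/2-1} c_i(\be_i + \be_{r/2+i})$; reading off coordinates $j$ and $r/2 + j$ for $1 \le j \le r/2 - 1$ gives $a_j = c_j = a_{r/2+j}$, which is the assertion. (I note the statement only claims this for $j = 1,\ldots, r/2-1$, consistent with the $j=0$ coordinate being perturbed by $\lambda$.)

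The main obstacle is purely the input fact that the dimension of the relation module is exactly $r - \varphi(r)$ and that it is generated by the shifts of $\Phi_r$ — that is, the irreducibility of $\Phi_r$ over $\QQ$ together with the explicit formulas $\Phi_p(x) = (x^p-1)/(x-1)$ and $\Phi_{2^m}(x) = x^{2^{m-1}}+1$. These are standard, so once they are invoked the argument is a short linear-algebra reading-off of coordinates; I would simply cite the irreducibility of the cyclotomic polynomial rather than reprove it.
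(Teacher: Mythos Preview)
Your proposal is correct and follows essentially the same approach as the paper: define the polynomial $p(x)=\sum_j a_j x^j-\lambda$, use that $p(\zeta)=0$ forces $\Phi_r\mid p$ over $\QQ$, and then read off the coefficient constraints from the explicit forms $\Phi_p(x)=1+x+\cdots+x^{p-1}$ and $\Phi_{2^m}(x)=x^{2^{m-1}}+1$. The only cosmetic difference is that you phrase the divisibility as membership in the relation module spanned by the shifts $x^i\Phi_r(x)$, whereas the paper writes $p=\Phi_r\cdot q$ directly and compares coefficients.
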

\begin{proof}
First define the polynomial 
\begin{equation*}
p(x) = a_{r-1}x^{r-1} + a_{r-2} x^{r-2}+\cdots+a_1 x + a_0-\lambda.
\end{equation*}
Suppose $r$ is a prime number.  The $r$--th cyclotomic polynomial is
\begin{equation*}
\Phi_r(x)=x^{r-1}+x^{r-2}+\cdots + x+1.
\end{equation*}
It follows from $p(\zeta)=0$  that $\Phi_r(x)$ is a factor of $p(x)$ and
\begin{equation*}
a_{r-1}=a_{r-2}=\cdots=a_1=a_0-\lambda.
\end{equation*}
Now suppose $r=2^m$, for some positive integer $m$.  
The $r$--th cyclotomic polynomial is
\begin{equation*}
\Phi_r(x)=x^{\frac{r}{2}}+1 .
\end{equation*}
As $p(\zeta)=0$, there exists polynomial $\sum_{j=0}^{\frac{r}{2}-1} b_j x^j$ satisfying
\begin{equation*}
p(x) = \Phi_r(x) \big(\sum_{j=0}^{\frac{r}{2}-1} b_j x^j\big) =  \big(\sum_{j=0}^{\frac{r}{2}-1} b_j x^j\big) +  \big(\sum_{j=0}^{\frac{r}{2}-1} b_j x^{\frac{r}{2}+j}\big).
\end{equation*}
We conclude that
$a_j=a_{\frac{r}{2}+j}=b_j$, for $j=1,\ldots,\frac{r}{2}-1$.
\end{proof}

For Theorems~\ref{thm:eigvaltwopower} and \ref{thm:eigvalp}, we recall that all rational eigenvalues of an integer--valued matrix are in fact  integers. 

The following result generalises Corollary~\ref{cor:TurynEven} for higher powers of two.

\begin{theorem}\label{thm:eigvaltwopower}
Let $G$ be an integer--weighted graph.  If $L(G)$ is diagonalisable by a Butson Hadamard matrix $H$ in $H(2^m,n)$, for some positive integer $m$, then
all integer eigenvalues of $L(G)$ are even.
\end{theorem}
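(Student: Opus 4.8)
The plan is to mimic the proof of Theorem~\ref{thm:epqmatrixeven} but now working modulo the structure provided by Lemma~\ref{lem:partition} in the $r = 2^m$ case. Let $\zeta = e^{2\pi\iu/r}$ with $r = 2^m$, and suppose $\lambda$ is an integer eigenvalue of $L(G)$ with eigenvector $\bh_k$, a column of $H$ whose entries are $r$-th roots of unity. For each vertex $u$ and each $t \in \{0, 1, \dots, r-1\}$, let $N_t(u)$ denote the number of neighbours $v$ of $u$ (counted with integer weight) such that $\bh_k(v)/\bh_k(u) = \zeta^t$. Dividing the eigenvalue equation $(L(G)\bh_k)(u) = \lambda \bh_k(u)$ through by $\bh_k(u)$, I would obtain
\begin{equation*}
d - \sum_{t=0}^{r-1} N_t(u)\zeta^t = \lambda,
\end{equation*}
where $d$ is the (constant, by Theorem~\ref{thm:regularity}) weighted degree. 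Rearranging puts this in the form $\sum_{t=0}^{r-1} a_t \zeta^t = d - \lambda \in \ZZ \subseteq \QQ$ with $a_0 = d - N_0(u)$ and $a_t = -N_t(u)$ for $t \geq 1$.

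Next I would invoke Lemma~\ref{lem:partition} with $r = 2^m$, which gives $a_t = a_{r/2 + t}$ for $t = 1, \dots, r/2 - 1$, i.e.\ $N_t(u) = N_{r/2+t}(u)$ for those $t$. This is the key structural input. I then substitute these equalities back into the scalar equation to collapse it: pairing the term $\zeta^t$ with $\zeta^{r/2+t} = -\zeta^t$ for $t = 1, \dots, r/2-1$, all of those terms cancel, leaving only the contributions from $t = 0$ and $t = r/2$ (note $\zeta^{r/2} = -1$). This yields
\begin{equation*}
d - N_0(u) + N_{r/2}(u) = \lambda,
\end{equation*}
hence $N_0(u) - N_{r/2}(u) = d - \lambda$. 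Combining this with the total degree identity $\sum_{t=0}^{r-1} N_t(u) = d$, and using that the remaining counts pair up as $N_t(u) = N_{r/2+t}(u)$ so that $\sum_{t \neq 0, r/2} N_t(u)$ is even, one gets $N_0(u) + N_{r/2}(u) \equiv d \pmod 2$. Adding the two displayed relations modulo $2$ gives $2N_0(u) \equiv 2d - \lambda \pmod 2$, so $\lambda \equiv 0 \pmod 2$; that is, $\lambda$ is even.

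The main obstacle I anticipate is the careful bookkeeping in the last step — making sure the parity argument is airtight, in particular verifying that $\sum_{t=1,\ t\neq r/2}^{r-1} N_t(u)$ is genuinely even (it is, because the index set $\{1, \dots, r-1\} \setminus \{r/2\}$ splits into the $r/2 - 1$ pairs $\{t, r/2+t\}$ with equal counts), and correctly tracking whether one needs $\lambda$ even or something slightly weaker. A secondary subtlety is that Lemma~\ref{lem:partition} as stated requires the coefficient vector indexed by $0, \dots, r-1$, so I must make sure the eigenvalue equation is genuinely expressed with all powers $\zeta^0, \dots, \zeta^{r-1}$ present (with zero coefficients allowed), and that $d - \lambda$ is rational — which is immediate since both are integers, the integrality of $\lambda$ being exactly the hypothesis (recalling the remark before the theorem that rational eigenvalues of integer matrices are integers). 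Everything else is a direct adaptation of the Turyn-case computation, just organised around the cyclotomic relation rather than around the explicit four values $\{\pm 1, \pm \ii\}$.
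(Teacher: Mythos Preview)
Your proof is correct and follows essentially the same approach as the paper's: both reduce the eigenvalue equation to an integer linear combination of $2^m$-th roots of unity, invoke Lemma~\ref{lem:partition} to pair up the coefficients $a_t = a_{r/2+t}$ for $t=1,\dots,r/2-1$, and then combine with a degree/row-sum identity to force $\lambda$ to be even. The only cosmetic difference is that the paper dephases $H$ and reads off the first entry of $L(G)\bh$ directly (using that the first row of $L(G)$ sums to zero), whereas you work at an arbitrary vertex $u$ and normalise by $\bh_k(u)$, appealing to Theorem~\ref{thm:regularity} for the constant degree $d$; these are equivalent bookkeeping choices. One small slip: with your definitions $a_0 = d - N_0(u)$ and $a_t = -N_t(u)$ one actually gets $\sum_t a_t\zeta^t = \lambda$, not $d-\lambda$, but this is immaterial since all that matters for Lemma~\ref{lem:partition} is that the right-hand side is rational.
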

\begin{proof}
We may assume $H$ is dephased.
Let $\lambda$ be an integer eigenvalue of $L(G)$ and $\bh$ be the column of $H$ satisfying $L(G) \bh = \lambda \bh$.
Let $\zeta=e^{\frac{2\pi \ii}{2^m}}$.   
For $j=0,\ldots, 2^m-1$,
let $X_j=\{s : \bh(s)=\zeta^j\}$.

For $j=0,\ldots, 2^m-1$, let
\begin{equation*}
a_j = \sum_{s\in X_j} L(G)_{1,s}.
\end{equation*}
Then the first entry of $L(G)\bh$ is 
\begin{equation*}
(L(G)\bh)_1 = \sum_{j=0}^{2^m-1} a_j \zeta^j = \lambda.
\end{equation*}
It follows from Lemma~\ref{lem:partition} that 
\begin{equation*}
\lambda = a_0+a_{2^{m-1}} \zeta^{2^{m-1}} +  \Phi_{2^m}(\zeta) \sum_{j=1}^{2^m-1} a_j \zeta^j = a_0-a_{2^{m-1}}.
\end{equation*}

Since $L(G)\bone=0 \cdot \bone$,
the first entry of $L(G)\bone$ is
\begin{equation*}
 \sum_{j=0}^{2^m-1}a_j = a_0 + a_{2^{m-1}} +2\sum_{j=1}^{2^{m-1}-1} a_j =0.
\end{equation*}
We conclude that $\lambda = a_0-a_{2^{m-1}} = -2\sum_{j=1}^{2^{m-1} }a_j$ is even.

\end{proof}

In the following theorem, we give a lower bound on the multiplicity of the integer eigenvalues other than zero for a graph diagonalisable by a Butson Hadamard matrix  $H$ in $H(p,n)$. 
Furthermore, we show that any integer eigenvalue is a multiple of $p$.

In the real setting, if a graph is Hadamard diagonalisable,  $G$ has an equitable partition corresponding to each $\{1,-1\}$--eigenvector of $L(G)$ \cite{BFK11}. In the case of Butson  Hadamard diagonalisable graphs, the proof of Theorem \ref{thm:eigvalp} leads to a similar result: the vertex set of the corresponding graph $G$ has an equitable partition based on the given eigenvector.

\begin{theorem}\label{thm:eigvalp}
Let $G$ be an integer--weighted graph on $n$ vertices.  
Suppose $L(G)$ is diagonalisable by a Butson Hadamard matrix $H$ in $H(p,n)$, for some prime number $p$. 
If $L(G)$ has  a non--zero integer eigenvalue $\lambda$, then
\begin{enumerate}[(i)]
\item
$\lambda$ is divisible by $p$,
\item
$\lambda$ has multiplicity at least $p-1$, and
\item
$G$ has an equitable partition with $p$ parts of equal size.
\end{enumerate}
\end{theorem}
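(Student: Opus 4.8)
The plan is to mimic the structure of the proof of Theorem~\ref{thm:eigvaltwopower}, but now using the prime case of Lemma~\ref{lem:partition}. We may assume $H$ is dephased, so $\bh_1 = \bone$. Let $\lambda$ be a non-zero integer eigenvalue of $L(G)$ and let $\bh$ be a column of $H$ with $L(G)\bh = \lambda\bh$. Set $\zeta = e^{2\pi\iu/p}$ and, for $j=0,\ldots,p-1$, let $X_j = \{s : \bh(s) = \zeta^j\}$; these form a partition of the vertex set. For a fixed vertex $u$ lying in some cell $X_t$, reading off the $u$-th entry of $L(G)\bh = \lambda\bh$ gives an equation of the form $\sum_{j=0}^{p-1} c_j(u)\zeta^j = \lambda\zeta^t$, where $c_j(u)$ records (signed) weighted-degree contributions from $X_j$ — explicitly $c_t(u) = d - \sum_{s \in X_t \setminus\{u\}} A(G)_{u,s}$ accounts for the diagonal term and $c_j(u) = -\sum_{s\in X_j} A(G)_{u,s}$ for $j\neq t$. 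Multiplying through by $\zeta^{-t}$ puts this in the form $\sum_{j} a_j(u)\zeta^j = \lambda \in \QQ$ with integer coefficients, so Lemma~\ref{lem:partition} applies: $a_1(u) = \cdots = a_{p-1}(u)$, and moreover $\lambda = a_0(u) - a_1(u)$.

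For part~(iii), I would extract from these relations that the number of neighbours of $u$ in each cell $X_j$ (suitably indexed relative to $t$) depends only on which cell $u$ belongs to, not on $u$ itself. Concretely, the equality $a_1(u) = \cdots = a_{p-1}(u)$ combined with the degree equation $\sum_j |N(u)\cap X_j| = d$ and with $L(G)\bone = 0$ (giving $\sum_j a_j = 0$ globally, hence $|X_j| = |X_{j'}|$ for $j,j' \neq 0$, and a short argument using $\lambda\neq 0$ and the orthogonality of $\bh$ to $\bone$ to force $|X_0| = |X_j|$ as well) shows the partition $\{X_0,\ldots,X_{p-1}\}$ is equitable with $p$ parts of equal size $n/p$. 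The quotient matrix will be a circulant-like object determined by $\lambda$, $d$, and $p$. Part~(i) then follows by a counting/trace argument: $\lambda = a_0(u) - a_1(u)$ and summing the global relation $a_0 + (p-1)a_1 = 0$ (where now $a_j = \sum_u$-type totals, or working cell-by-cell) forces $p \mid \lambda$, exactly as the parity argument in the $2^m$ case forced evenness via $\lambda = -2\sum a_j$. For part~(ii), the vectors obtained by "splitting" $\bh$ according to the cells — for each primitive-root sector, the indicator-weighted vector $\bh^{(\ell)}(s) = \bh(s)$ if $\bh(s) \in \{\zeta^\ell\text{-orbit}\}$ and $0$ otherwise, in analogy with $\bh_k^{\cR}, \bh_k^{\cI}$ before Corollary~\ref{cor:TurynEven} — should each be eigenvectors of $L(G)$ for $\lambda$ whenever nonzero; more cleanly, I expect that the $p-1$ Galois conjugates of $\bh$ (apply $\zeta \mapsto \zeta^a$, $\gcd(a,p)=1$) are all columns of $H$ up to scaling, all eigenvectors for the same rational eigenvalue $\lambda$, and are linearly independent, giving multiplicity $\geq p-1$.

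The main obstacle I anticipate is part~(ii): justifying multiplicity $\geq p-1$ requires either (a) showing the Galois conjugates of the eigenvector $\bh$ are again eigenvectors lying in the span of $H$'s columns — which is clean if one argues that $L(G)$ has rational (indeed integer) entries, so $\sigma(L(G)\bh) = L(G)\sigma(\bh)$ for any field automorphism $\sigma$, hence $L(G)\sigma(\bh) = \sigma(\lambda)\sigma(\bh) = \lambda\sigma(\bh)$, and then checking $\sigma(\bh)$ is still a $\pm$-flat vector of $p$-th roots of unity orthogonal to the others — or (b) a direct dimension count on the eigenspace using the equitable partition. I would pursue route~(a), taking care that distinct conjugates give linearly independent vectors (they are, since a Vandermonde-type argument on the cell structure separates them). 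Parts~(i) and~(iii) I expect to be routine given Lemma~\ref{lem:partition}, following the template already laid down for $H(2^m,n)$ and for the Turyn case.
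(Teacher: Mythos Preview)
Your plan for parts (i) and (iii) is essentially what the paper does: fix a vertex, read off the row equation, apply Lemma~\ref{lem:partition} to get $a_1=\cdots=a_{p-1}$, and combine with the zero row sum $\sum_j a_j=0$ to obtain $\lambda=-pa_1$ and to see that the neighbour counts between cells are independent of the vertex. One small wobble: the equal cell sizes come straight from $\bh\perp\bone$ (so $\sum_j|X_j|\zeta^{-j}=0$) together with Lemma~\ref{lem:partition} applied with rational value $0$; you do not need a separate argument isolating $|X_0|$.

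For part (ii) your Galois-conjugate route is correct but genuinely different from the paper's. The paper writes $\bh=\sum_{j=0}^{p-2}\zeta^{j}(\mathbf{y}_j-\mathbf{y}_{p-1})$ with $\mathbf{y}_j$ the characteristic vector of $X_j$, observes that each $(L(G)-\lambda I)(\mathbf{y}_j-\mathbf{y}_{p-1})$ is an integer vector, and then uses the $\QQ$-linear independence of $1,\zeta,\ldots,\zeta^{p-2}$ to conclude each of these $p-1$ integer vectors vanishes. Your approach instead applies the field automorphisms $\sigma_a\colon\zeta\mapsto\zeta^{a}$ to $\bh$ directly and notes that $L(G)\sigma_a(\bh)=\lambda\,\sigma_a(\bh)$ because $L(G)$ and $\lambda$ are rational. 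Both arguments produce $(p-1)$ independent eigenvectors spanning the same space (indeed $\sigma_a(\bh)=\sum_j\zeta^{aj}\mathbf{y}_j$, so the two bases are related by a Vandermonde change of coordinates). The paper's version is slightly more elementary in that it never invokes Galois theory explicitly; yours is more conceptual and makes the role of rationality of $\lambda$ transparent. One correction: there is no reason the Galois conjugates $\sigma_a(\bh)$ should themselves be columns of $H$ --- drop that claim, since all you need is that they are eigenvectors of $L(G)$, which follows from $L(G)$ having integer entries.
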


\begin{proof}

We define $\bh$, $X_0, X_1, \ldots, X_{p-1}$ and $a_0, a_1, \ldots, a_{p-1}$ as in the proof of Theorem~\ref{thm:eigvaltwopower}, and let $\zeta = e^{\frac{2\pi \ii}{p}}$.
Applying Lemma~\ref{lem:partition} to
\begin{equation*}
(L(G)\bh)_1 = \sum_{j=0}^{p-1} a_j \zeta^j = \lambda
\end{equation*}
yields $a_1=\cdots=a_{p-1}$.   As $L(G)$ has zero row sums, we have $a_0=-(p-1)a_1$ and
\begin{equation*}
\lambda = a_0 -a_1 + a_1\Phi_p(\zeta) =-p a_1.
\end{equation*}

For $j=0,\ldots, p-1$, let $\mathbf{y}_j$ be the characteristic vector of the subset $X_j$ of $V(G)$ so
\begin{equation*}
\bh = \sum_{j=0}^{p-1} \zeta^j \mathbf{y}_j = \sum_{j=0}^{p-2} \zeta^j (\mathbf{y}_j-\mathbf{y}_{p-1})
\end{equation*}
where the last equality results from $\zeta^{p-1} = -\sum_{j=0}^{p-2} \zeta^j$.
Now
\begin{equation*}
\big(L(G) - \lambda \I \big) (\mathbf{y}_j-\mathbf{y}_{p-1})
\end{equation*}
is a vector with integer entries.  
So each entry of $(L(G) - \lambda \I) \bh$ is a linear combination of $1, \zeta, \zeta^2, \ldots, \zeta^{p-2}$ with integer coefficients.
Since $1, \zeta, \zeta^2, \ldots, \zeta^{p-2}$ are linearly independent over $\QQ$, we conclude that 
\begin{equation*}
\big(L(G) - \lambda \I \big) (\mathbf{y}_j-\mathbf{y}_{p-1}) = \bzero, \quad \text{for $j=0,\ldots, p-2$.}
\end{equation*}
Hence the eigenspace for the eigenvalue $\lambda$ contains at least $(p-1)$ linearly independent vectors $\{\mathbf{y}_j-\mathbf{y}_{p-1}\}_{j=0}^{p-2}$.

For vertex $u \in X_k$, let $a_{k,j}(u)$ be the sum of the weights of edges incident with $u$ and a vertex in $X_j$.
Then
\begin{equation*}
(L(G)\bh)(u) =\sum_{j=0}^{p-1} a_{k,j}(u)\zeta^j = \lambda \zeta^k.
\end{equation*}
By Lemma~\ref{lem:partition} and the fact that $(L(G)\bone)(u) = \sum_{j=0}^{p-1}a_{k,j}(u)=0$, we have
\begin{equation*}
a_{k,j}=
\begin{cases}
\frac{-\lambda}{p} & \text{if $j\neq k$}\\
\frac{(p-1)\lambda}{p} &\text{if $j=k$}
\end{cases}
\end{equation*}
which is independent of the vertex $u$.  Therefore $X_0, X_1, \ldots, X_{p-1}$ is an equitable partition of $G$.
\end{proof}
 
\begin{remark}
Let $C_r$ be the unweighted cycle of length $r$.  Then $L(C_r)$ is diagonalisable by the character table of $\ZZ_r$ which belongs to $H(r,r)$ and it has spectrum
\begin{equation*}
\Big\{2-2\cos\left(\frac{2\ell\pi}{r}\right)\Big\}_{\ell=0}^{r-1}. 
\end{equation*}   
There exist many integer--weighted graphs that have irrational Laplacian eigenvalues.
\end{remark}

\section{Continuous--time Quantum Walks}\label{sec:Qwalks}

Given a graph $G$, the (Laplacian) continuous--time quantum walk on $G$ is determined by the operator
\begin{equation*}
\e^{-\ii tL(G)}.
\end{equation*}
We use $\mathbf{e}_v$ to denote the characteristic vector of $v$.

\emph{ Fractional revival}  occurs from vertex $a$ to vertex $b$ in $G$ at time $\tau$ if \begin{equation*}
\e^{-\ii \tau L(G)} \mathbf{e}_a = \alpha  \mathbf{e}_a + \beta \mathbf{e}_b,
\end{equation*}
for some complex scalars $\alpha$ and $\beta \neq 0$ with $|\alpha|^2+ |\beta|^2=1$.  We also say $(\alpha, \beta)$--fractional revival occurs.
If $\alpha=0$, then we have (Laplacian) perfect state transfer from $a$ to $b$.  Analogous definitions go through for the continuous--time walk associated with the adjacency matrix and the  corresponding operator $e^{-\ii t  A(G)}$. 

\begin{theorem}
\label{thm:FR}
Suppose $L(G)$ is diagonalisable by a dephased complex Hadamard matrix $H$ of order $n$, say $L(G) = \frac{1}{n} H \Lambda H^*,$ where  $ \Lambda $ is the  diagonal matrix with $ \Lambda_{j,j} = \lambda_j$, for $j=1,\ldots, n$.
Then $(\alpha, \beta)$--(Laplacian) fractional revival occurs from vertex $a$ to vertex $b$ in $G$ at time $\tau$ if and only if, for $j=1,\ldots, n$,
\begin{enumerate}
\item
$H_{a,j} = \pm H_{b,j}$,   and
\item
$\e^{-\ii \tau \lambda_j} = 
\begin{cases}
1 & \text{if $H_{a,j}=H_{b,j}$},\\
\alpha - \beta & \text{if $H_{a,j}=-H_{b,j}$,}
\end{cases}
$
\end{enumerate}
In this case, there exists a real number $\gamma$ such that 
\begin{equation*}
\alpha =  \cos \gamma \ \e^{\ii \gamma},
\quad
\beta =- \ii \sin \gamma\ \e^{\ii \gamma},
\quad \text{and} \quad 
\alpha - \beta = \e^{2\gamma \ii}.
\end{equation*}
\end{theorem}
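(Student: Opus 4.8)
The plan is to work in the eigenbasis given by the columns of $H$. Write $\be_a = \frac{1}{n}\sum_j \overline{H_{a,j}}\,\bh_j$ and $\be_b = \frac{1}{n}\sum_j \overline{H_{b,j}}\,\bh_j$, using that $\{\frac{1}{\sqrt n}\bh_j\}$ is an orthonormal basis and $\langle \be_a,\bh_j\rangle = \overline{H_{a,j}}$. Applying $\e^{-\ii\tau L(G)}$ coordinatewise in this basis gives
\begin{equation*}
\e^{-\ii\tau L(G)}\be_a = \frac{1}{n}\sum_{j=1}^n \e^{-\ii\tau\lambda_j}\,\overline{H_{a,j}}\,\bh_j,
\end{equation*}
while
\begin{equation*}
\alpha\be_a + \beta\be_b = \frac{1}{n}\sum_{j=1}^n \big(\alpha\,\overline{H_{a,j}} + \beta\,\overline{H_{b,j}}\big)\bh_j.
\end{equation*}
Since the $\bh_j$ are linearly independent, fractional revival at time $\tau$ is equivalent to the $n$ scalar equations $\e^{-\ii\tau\lambda_j}\,\overline{H_{a,j}} = \alpha\,\overline{H_{a,j}} + \beta\,\overline{H_{b,j}}$ for all $j$. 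Dividing by $\overline{H_{a,j}}$ (which has modulus one, so this is harmless) and writing $z_j := \overline{H_{b,j}}/\overline{H_{a,j}} = H_{a,j}/H_{b,j}$ (a unit complex number), the condition becomes $\e^{-\ii\tau\lambda_j} = \alpha + \beta z_j$ for every $j$.

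The next step is to extract conclusion (1), namely $z_j = \pm 1$. The left-hand side $\e^{-\ii\tau\lambda_j}$ has modulus one, so $|\alpha + \beta z_j| = 1$ for all $j$. Since $|\alpha|^2 + |\beta|^2 = 1$ and $|z_j| = 1$, expanding gives $|\alpha|^2 + |\beta|^2 + 2\,\mathrm{Re}(\overline{\alpha}\beta z_j) = 1$, hence $\mathrm{Re}(\overline{\alpha}\beta z_j) = 0$ for every $j$. Because $H$ is dephased, its first column is $\bone$, so $H_{a,1} = H_{b,1} = 1$ and $z_1 = 1$; thus $\mathrm{Re}(\overline{\alpha}\beta) = 0$, i.e. $\overline{\alpha}\beta$ is purely imaginary (note $\beta \neq 0$, and one checks $\alpha\neq 0$ is not needed here — if $\alpha = 0$ the argument degenerates gracefully). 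Then $\mathrm{Re}(\overline{\alpha}\beta z_j) = 0$ together with $\overline{\alpha}\beta \in \ii\RR\setminus\{0\}$ forces $z_j \in \RR$, and since $|z_j| = 1$ we get $z_j = \pm 1$, which is exactly (1). Conclusion (2) then follows by substituting back: if $z_j = 1$ then $\e^{-\ii\tau\lambda_j} = \alpha + \beta$; but also, taking $j$ such that this holds must be consistent with $j=1$ giving $\e^{-\ii\tau\lambda_1} = \e^0 = 1 = \alpha+\beta$, so in fact $\alpha + \beta = 1$ and $\e^{-\ii\tau\lambda_j} = 1$ in that case; if $z_j = -1$ then $\e^{-\ii\tau\lambda_j} = \alpha - \beta$. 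The converse direction is immediate: given (1) and (2), one reads the scalar equations off in reverse and reassembles $\e^{-\ii\tau L(G)}\be_a = \alpha\be_a + \beta\be_b$.

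For the final normalization claim, combine $\alpha + \beta = 1$ with $|\alpha - \beta|^2 = 1$ (the latter because $\e^{-\ii\tau\lambda_j} = \alpha-\beta$ for at least the relevant $j$, or vacuously if no such $j$ exists, in which case $\beta=0$, excluded). From $\alpha = \frac{1 + (\alpha-\beta)}{2}$ and $\beta = \frac{1 - (\alpha-\beta)}{2}$, writing $\alpha - \beta = \e^{2\gamma\ii}$ for a real $\gamma$ gives $\alpha = \frac{1 + \e^{2\gamma\ii}}{2} = \cos\gamma\,\e^{\ii\gamma}$ and $\beta = \frac{1 - \e^{2\gamma\ii}}{2} = -\ii\sin\gamma\,\e^{\ii\gamma}$, as claimed; one verifies $|\alpha|^2 + |\beta|^2 = \cos^2\gamma + \sin^2\gamma = 1$ for consistency. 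The main obstacle I anticipate is handling the edge cases cleanly — specifically, confirming that $\alpha + \beta = 1$ genuinely follows in all cases (it comes from the $j=1$ dephased column) and making sure the step $\mathrm{Re}(\overline{\alpha}\beta z_j) = 0 \Rightarrow z_j = \pm 1$ is airtight, which hinges on $\overline{\alpha}\beta \neq 0$; since $\beta \neq 0$ by hypothesis, this fails only if $\alpha = 0$, and in that case $z_j = \pm 1$ still holds because $|\beta z_j| = |\beta| = 1$ forces $\e^{-\ii\tau\lambda_j} = \beta z_j$ with $\beta$ fixed, and consistency with $j=1$ ($\beta = 1$) then gives $z_j = \e^{-\ii\tau\lambda_j}$, which must be $\pm1$ — actually this last point needs the observation that in the $\alpha=0$ (perfect state transfer) case one separately knows $z_j^2$-type constraints; I would treat $\alpha = 0$ as a quick special case rather than force it into the generic argument.
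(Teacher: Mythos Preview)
Your proof is correct and takes essentially the same approach as the paper: both reduce the fractional-revival condition to the scalar equations $\e^{-\ii\tau\lambda_j}=\alpha+\beta z_j$ (with $z_j=H_{a,j}\overline{H_{b,j}}$), extract $\alpha+\beta=1$ from the dephased first column and $\lambda_1=0$, and then argue $z_j=\pm1$ from $|\alpha+\beta z_j|=1$ together with $|\alpha|^2+|\beta|^2=1$. The only difference is presentational---the paper phrases the last step geometrically via a figure showing the right angle between $\alpha$ and $\beta$, whereas you expand $|\alpha+\beta z_j|^2$ to obtain $\mathrm{Re}(\bar\alpha\beta z_j)=0$ and combine with $\mathrm{Re}(\bar\alpha\beta)=0$---and your explicit flag on the $\alpha=0$ edge case (which the symmetry of $\e^{-\ii\tau L}$ resolves via $z_j^2=1$) is in fact more careful than the paper's own treatment.
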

\begin{proof}
We have 
\begin{equation*}
L(G) = \frac{1}{n} H \Lambda H^* 
\quad \text{and} \quad 
\e^{-\ii \tau L(G)} = \frac{1}{n} H \e^{-\ii \tau \Lambda} H^*.
\end{equation*}
Now $\e^{-\ii \tau L(G)}  \mathbf{e}_a = \alpha \mathbf{e}_a + \beta \mathbf{e}_b$ if and only if
\begin{equation}
\label{eqn:FR}
 \e^{-\ii \tau \Lambda} H^* \mathbf{e}_a = \alpha H^* \mathbf{e}_a + \beta H^* \mathbf{e}_b,
\end{equation}
Since $\lambda_1=0$ and $H$ is dephased, the first entry of (\ref{eqn:FR}) gives
\begin{equation*}
\alpha + \beta=1.
\end{equation*}
It follows from $|\alpha|^2+|\beta|^2 = 1$ and $\alpha+\beta=1$ that $\alpha = \cos \gamma\ \e^{\ii \gamma}$ and $\beta = -\ii \sin \gamma \ \e^{\ii \gamma}$, 
for some $\gamma \in \big(-\frac{\pi}{2}, \frac{\pi}{2} \big]$.

For $j > 1$, the $j$--th entry of (\ref{eqn:FR}) is
\begin{equation*}
 \e^{-\ii \tau \lambda_j} \overline{H_{a,j}} = \alpha \overline{H_{a,j}} + \beta \overline {H_{b,j}}
 \end{equation*}
 which simplifies to
 \begin{equation}
 \label{eqn:FR2}
 \e^{-\ii \tau \lambda_j}  = \beta H_{a,j}\overline{H_{b,j}} + \alpha.
\end{equation}

\begin{center}
\begin{tikzpicture}
\draw [->] (-2, 0) -- (2,0);
\draw [->] (0,-2) -- (0,2);
\draw (0,0) circle (1.5cm);
\draw (1.6,0) node[anchor=north]{$1$};
\draw [->] (0,0) -- (0.625,0.75);
\draw (0.2,0.3) node[anchor=south]{$\alpha$};
\draw [->]  (0.625,0.75) -- (1.5,0);
\draw (1,0.3) node[anchor=south]{$\beta$};
\draw (0.5, 0.6) -- (0.625, 0.5) -- (0.75,0.65);
\draw [->, blue]  (0.625,0.75)--(-0.25, 1.475);
\draw [blue]  (0.25,1) node[anchor=south]{$-\beta$};

\draw (0.25,0) to [out=90, in =330] (0.15,0.225);
\draw (0.2,0.2) node[anchor=west]{$\gamma$};
\end{tikzpicture}
\end{center}
Since $|H_{a,j}\overline{H_{b,j}}|=1$ and  (\ref{eqn:FR2}) implies $|\alpha + \beta \big(H_{a,j}\overline{H_{b,j}}\big)|=1$, as demonstrated in the above figure, it can be shown that
\begin{equation*}
H_{a,j}\overline{H_{b,j}} = \pm 1 
\end{equation*}
and
\begin{equation*}
\e^{-\ii \tau \lambda_j} = 
\begin{cases}
1 & \text{if $H_{a,j}=H_{b,j}$}\\
\alpha -\beta & \text{if $H_{a,j}=-H_{b,j}$}
\end{cases}
\end{equation*}
for $j=2,\ldots,n$.
\end{proof}

\begin{corollary}
\label{cor:FR}
Suppose $L(G)$ is diagonalisable by a dephased complex Hadamard matrix $H$ of order $n$.
Then $(\e^{\ii\gamma} \cos \gamma,-\e^{\ii \gamma} \ii \sin \gamma)$--(Laplacian) fractional revival occurs from vertex $a$ to vertex $b$ in $G$ at time $\tau$ if and only if there exists $\gamma \in \big(-\frac{\pi}{2}, \frac{\pi}{2} \big]$
such that, for $j=1,\ldots, n$,
\begin{enumerate}
\item
\label{cor:FR(1)}
$H_{a,j} = \pm H_{b,j}$,   and
\item
\label{cor:FR(2)}
$
-\tau\lambda_j = 
\begin{cases}
0 \pmod{2\pi} &  \text{if $H_{a,j}=H_{b,j}$},\\
2\gamma \pmod{2\pi} & \text{if $H_{a,j}=-H_{b,j}$}.
\end{cases}
$

\end{enumerate}

\end{corollary}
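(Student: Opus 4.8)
The plan is to obtain Corollary~\ref{cor:FR} as a direct restatement of Theorem~\ref{thm:FR}: we substitute the explicit parametrisation $\alpha = \e^{\ii\gamma}\cos\gamma$, $\beta = -\e^{\ii\gamma}\ii\sin\gamma$ into the conclusion of that theorem and rewrite the two exponential equalities appearing in its condition~(2) as congruences modulo $2\pi$. The only elementary facts needed are that $\e^{\ii\theta_1} = \e^{\ii\theta_2}$ exactly when $\theta_1 \equiv \theta_2 \pmod{2\pi}$, and the identity $\alpha - \beta = \e^{\ii\gamma}(\cos\gamma + \ii\sin\gamma) = \e^{2\ii\gamma}$ already recorded in the statement of Theorem~\ref{thm:FR}.

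For the forward implication, suppose $(\e^{\ii\gamma}\cos\gamma,\, -\e^{\ii\gamma}\ii\sin\gamma)$--fractional revival occurs from $a$ to $b$ at time $\tau$, and set $\alpha$, $\beta$ as above. Since $|\alpha|^2 + |\beta|^2 = \cos^2\gamma + \sin^2\gamma = 1$, Theorem~\ref{thm:FR} applies and gives $H_{a,j} = \pm H_{b,j}$ for every $j$, which is item~\ref{cor:FR(1)}. By the same theorem, $\e^{-\ii\tau\lambda_j} = 1$ when $H_{a,j} = H_{b,j}$ and $\e^{-\ii\tau\lambda_j} = \alpha - \beta = \e^{2\ii\gamma}$ when $H_{a,j} = -H_{b,j}$; comparing arguments, these are precisely the congruences $-\tau\lambda_j \equiv 0 \pmod{2\pi}$ and $-\tau\lambda_j \equiv 2\gamma \pmod{2\pi}$, i.e. item~\ref{cor:FR(2)}.

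Conversely, given $\gamma \in \big(-\frac{\pi}{2}, \frac{\pi}{2}\big]$ for which items~\ref{cor:FR(1)} and \ref{cor:FR(2)} hold, set $\alpha$, $\beta$ as above, so again $|\alpha|^2 + |\beta|^2 = 1$. Reversing the computation of the previous paragraph, item~\ref{cor:FR(2)} is equivalent to saying that $\e^{-\ii\tau\lambda_j}$ equals $1$ if $H_{a,j} = H_{b,j}$ and equals $\alpha - \beta$ if $H_{a,j} = -H_{b,j}$; together with item~\ref{cor:FR(1)} these are exactly hypotheses (1) and (2) of Theorem~\ref{thm:FR}, so its ``if'' direction yields $\e^{-\ii\tau L(G)}\be_a = \alpha\be_a + \beta\be_b$, the asserted fractional revival. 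I do not expect a genuine obstacle here, since the content is entirely carried by Theorem~\ref{thm:FR}; the only points deserving a moment's care are the identity $\alpha - \beta = \e^{2\ii\gamma}$ and the harmless degenerate value $\gamma = 0$, for which $\beta = 0$ and the statement merely records the trivial evolution $\e^{-\ii\tau L(G)}\be_a = \be_a$.
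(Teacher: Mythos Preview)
Your proposal is correct and matches the paper's approach: the paper gives no explicit proof of Corollary~\ref{cor:FR}, treating it as an immediate restatement of Theorem~\ref{thm:FR} obtained by substituting $\alpha-\beta=\e^{2\ii\gamma}$ and rewriting the exponential equalities as congruences modulo $2\pi$. Your remark on the degenerate case $\gamma=0$ (where $\beta=0$ and the definition of fractional revival is not met) is a fair observation about the statement itself rather than a gap in the argument.
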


\begin{remark}
\label{rmk:FR}
Since a complex Hadamard diagonalisable graph $G$ is 
regular, we have $A(G) = d\I - L(G)$ and
\begin{equation*}
\e^{-\ii \tau A(G)} = \e^{-\ii d\tau} \e^{\ii \tau L(G)} = \e^{-\ii d\tau} \overline{e^{-\ii \tau L(G)}}.
\end{equation*}
Hence $G$ has $(\e^{\ii\gamma} \cos \gamma,-\e^{\ii \gamma} \ii \sin \gamma)$--(Laplacian) fractional revival from $a$ to $b$ if and only if it has \hfill\\
$(\e^{\ii (-d\tau-\gamma)} \cos \gamma,\e^{\ii (-d\tau-\gamma)} \ii \sin \gamma)$--(adjacency) fractional revival.

It follows from Proposition~5.1 of \cite{FR1}  that $a$ and $b$ are strongly cospectral in $G$: that is, if $L(G)$ has spectral decomposition $\sum_{i=1}^m \lambda_iE_i$; then  $a$ and $b$ are strongly cospectral if and only if  $E_j\mathbf{e}_a=\pm E_j\mathbf{e}_b$ for each $j=1, \dots, m$.   By Theorem~5.5 and Corollary~5.6 of \cite{FR1}  and $G$ being a regular graph,
both $A(G)$ and $L(G)$ have integral eigenvalues.
\end{remark}

The following corollary extends the proof of Theorem~4 in \cite{JKPSZ17} to complex Hadamard diagonalisable graphs.
\begin{corollary}
\label{cor:PST}
Suppose $L(G)$ is diagonalisable by a dephased complex Hadamard matrix $H$ of order $n$.
Then (Laplacian) perfect state transfer occurs from vertex $a$ to vertex $b$ in $G$ at time $\tau$ if and only if, for $j=1,\ldots, n$,
\begin{enumerate}
\item
$H_{a,j} = \pm H_{b,j}$,   and
\item
$
-\tau\lambda_j = 
\begin{cases}
0 \pmod{2\pi} &  \text{if $H_{a,j}=H_{b,j}$},\\
\pi \pmod{2\pi} & \text{if $H_{a,j}=-H_{b,j}$}.
\end{cases}
$
\end{enumerate}
\end{corollary}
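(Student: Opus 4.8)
The plan is to derive Corollary~\ref{cor:PST} as the special case of Theorem~\ref{thm:FR} (equivalently Corollary~\ref{cor:FR}) in which $\alpha = 0$. By definition, perfect state transfer from $a$ to $b$ at time $\tau$ is exactly $(\alpha,\beta)$--fractional revival with $\alpha = 0$ (and hence $|\beta| = 1$). So I would start by invoking Theorem~\ref{thm:FR}: perfect state transfer occurs if and only if condition (1), $H_{a,j} = \pm H_{b,j}$ for all $j$, holds, together with condition (2), which says $\e^{-\ii\tau\lambda_j} = 1$ when $H_{a,j} = H_{b,j}$ and $\e^{-\ii\tau\lambda_j} = \alpha - \beta$ when $H_{a,j} = -H_{b,j}$.

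The only work is to evaluate $\alpha - \beta$ when $\alpha = 0$. From the parametrisation in Theorem~\ref{thm:FR}, $\alpha = \cos\gamma\,\e^{\ii\gamma}$ and $\beta = -\ii\sin\gamma\,\e^{\ii\gamma}$, so $\alpha = 0$ forces $\cos\gamma = 0$, i.e. $\gamma = \frac{\pi}{2}$ (within the range $\gamma \in (-\frac{\pi}{2},\frac{\pi}{2}]$). Then $\beta = -\ii\sin(\frac{\pi}{2})\,\e^{\ii\pi/2} = -\ii \cdot 1 \cdot \ii = 1$, and $\alpha - \beta = \e^{2\gamma\ii} = \e^{\ii\pi} = -1$. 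Thus condition (2) of Theorem~\ref{thm:FR} becomes $\e^{-\ii\tau\lambda_j} = 1$ when $H_{a,j} = H_{b,j}$ and $\e^{-\ii\tau\lambda_j} = -1$ when $H_{a,j} = -H_{b,j}$. Rewriting these as congruences for $-\tau\lambda_j$ modulo $2\pi$ gives precisely $-\tau\lambda_j \equiv 0 \pmod{2\pi}$ in the first case and $-\tau\lambda_j \equiv \pi \pmod{2\pi}$ in the second, which is exactly the statement of Corollary~\ref{cor:PST}. Alternatively, one can obtain the same conclusion directly from Corollary~\ref{cor:FR} by substituting $\gamma = \frac{\pi}{2}$, since $\e^{\ii\pi/2}\cos(\pi/2) = 0$ and $2\gamma = \pi$.

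There is essentially no obstacle here; the statement is a direct corollary and the entire content is the two-line computation $\alpha = 0 \implies \gamma = \pi/2 \implies \alpha - \beta = -1$. If any care is needed, it is only in noting that the hypotheses of Theorem~\ref{thm:FR} (dephased $H$, $L(G) = \frac{1}{n}H\Lambda H^*$) are the same as those assumed in Corollary~\ref{cor:PST}, and in observing that one does not need $\beta \neq 0$ as a separate hypothesis because $|\alpha|^2 + |\beta|^2 = 1$ together with $\alpha = 0$ automatically gives $|\beta| = 1 \neq 0$, so the fractional revival framework applies verbatim.
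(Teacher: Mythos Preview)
Your proposal is correct and matches the paper's intended approach: the paper states Corollary~\ref{cor:PST} without proof, treating it as the immediate specialisation of Theorem~\ref{thm:FR} (or equivalently Corollary~\ref{cor:FR}) to the case $\alpha=0$, i.e.\ $\gamma=\frac{\pi}{2}$, whence $\alpha-\beta=\e^{2\gamma\ii}=-1$. Your two-line computation is exactly what is being left to the reader.
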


We extend Theorem~2.4 of \cite{Cayley} to (Laplacian) fractional revival here.
\begin{corollary}
Let $G$ be a Cayley graph on the finite abelian group $\Gamma$ with connection set $C$.
Then $(\alpha, \beta)$--(Laplacian) fractional revival occurs in $G$ from $a$ to $b$ at time $\tau$ if and only if
the following three conditions hold: 
\begin{enumerate}
\item
The eigenvalues of $L(G)$ are integers;
\item
$a-b$ has order two;
\item
$\e^{-\ii \tau \lambda_j} =\alpha + \chi_{j}(a-b) (1-\alpha)$,
for $j \in \Gamma$.
\end{enumerate}
\end{corollary}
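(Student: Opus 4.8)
The plan is to specialise the general fractional revival characterisation (Corollary~\ref{cor:FR} and Theorem~\ref{thm:FR}) to the Cayley graph setting, where the diagonalising complex Hadamard matrix $H$ is the transpose of the character table of $\Gamma$, as in the Example on Cayley graphs. First I would set up notation: let $H$ have columns indexed by the characters $\chi_j$ of $\Gamma$ and rows indexed by group elements, so that $H_{g,j} = \chi_j(g)$. The eigenvalue of $L(G)$ associated with $\chi_j$ is $\lambda_j = \sum_{c\in C}\bigl(1-\chi_j(c)\bigr)$, which is real since $C=-C$. Note this $H$ need not be dephased in the sense of having an all-ones first column unless we pick $\chi_1$ trivial, but we can reorder so that $\chi_1$ is the trivial character; then $H$ is dephased and Theorem~\ref{thm:FR}/Corollary~\ref{cor:FR} applies directly.

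Next I would translate the two conditions of Corollary~\ref{cor:FR}. Condition~\ref{cor:FR(1)} says $H_{a,j}=\pm H_{b,j}$ for all $j$, i.e.\ $\chi_j(a)=\pm\chi_j(b)$, equivalently $\chi_j(a-b)=\pm 1$ for every character $\chi_j$. Since the characters separate points of $\Gamma$ and $\{\pm1\}$-valued-ness on all characters forces $a-b$ to lie in the $2$-torsion subgroup, this is equivalent to $a-b$ having order dividing two; combined with $\beta\neq 0$ (so $a\neq b$, hence $a-b\neq 0$) we get exactly condition~(2), that $a-b$ has order two. Here I should be slightly careful: $\chi_j(a-b)\in\{\pm1\}$ for all $j$ iff $\chi_j((a-b)+(a-b))=\chi_j(2(a-b))=1$ for all $j$ iff $2(a-b)=0$; this is the clean argument. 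Once $a-b$ has order two, $H_{a,j}=H_{b,j}$ precisely when $\chi_j(a-b)=1$ and $H_{a,j}=-H_{b,j}$ precisely when $\chi_j(a-b)=-1$, so condition~\ref{cor:FR(2)} becomes: $\e^{-\ii\tau\lambda_j}=1$ when $\chi_j(a-b)=1$, and $\e^{-\ii\tau\lambda_j}=\e^{2\gamma\ii}=\alpha-\beta$ when $\chi_j(a-b)=-1$. Writing this uniformly, $\e^{-\ii\tau\lambda_j} = \tfrac{1+\chi_j(a-b)}{2} + \tfrac{1-\chi_j(a-b)}{2}(\alpha-\beta)$. Using $\alpha+\beta=1$ (forced by the dephased first coordinate, as in the proof of Theorem~\ref{thm:FR}), one has $\tfrac{1+\chi_j(a-b)}{2}\cdot 1 + \tfrac{1-\chi_j(a-b)}{2}(\alpha-\beta) = \alpha + \chi_j(a-b)(1-\alpha)$ after substituting $\beta = 1-\alpha$; this is exactly condition~(3).

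Finally I would address condition~(1), integrality of the eigenvalues. If $(\alpha,\beta)$-fractional revival occurs, Remark~\ref{rmk:FR} already tells us that $a$ and $b$ are strongly cospectral and, $G$ being regular, $L(G)$ has integral eigenvalues — so integrality is a consequence, giving the forward direction. For the reverse direction, I would argue that conditions (1)--(3) are exactly what is needed to invoke Corollary~\ref{cor:FR}: given integral $\lambda_j$, $a-b$ of order two, and the phase condition (3), one recovers conditions \ref{cor:FR(1)} and \ref{cor:FR(2)} by reading the above equivalences backwards, and then Corollary~\ref{cor:FR} produces the fractional revival (with the appropriate $\gamma$ determined by $\e^{2\gamma\ii}=\alpha-\beta$, where $\alpha-\beta = 2\alpha-1$). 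I expect the main obstacle — really a bookkeeping subtlety rather than a deep one — to be handling the role of integrality cleanly in both directions and making sure the uniform formula in (3) is genuinely equivalent to the two-case formula in Corollary~\ref{cor:FR}, including the degenerate characters where $\chi_j(a-b)=1$ trivially (such as $j=1$); I would state these equivalences as short lemmas inside the proof to keep the logic transparent.
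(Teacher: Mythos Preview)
Your proposal is correct and follows essentially the same route as the paper: specialise Theorem~\ref{thm:FR} (the paper cites the theorem, you cite the corollary, which is immaterial) to the character-table Hadamard matrix, translate $H_{a,j}=\pm H_{b,j}$ into $\chi_j(a-b)\in\{\pm1\}$ hence $2(a-b)=0$, rewrite the two-case phase condition uniformly using $\alpha+\beta=1$, and pull integrality from Remark~\ref{rmk:FR}. Your write-up is in fact more careful than the paper's short proof---you explicitly note that $\beta\neq 0$ is what upgrades ``order divides two'' to ``order exactly two'', you spell out the algebra turning the two-case formula into $\alpha+\chi_j(a-b)(1-\alpha)$, and you flag that integrality is really a consequence rather than an independent hypothesis needed in the converse---all of which the paper leaves implicit.
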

\begin{proof}
By Remark~\ref{rmk:FR}, $L(G)$ has integral eigenvalues if $G$ admits fractional revival.

For $j \in \Gamma$, let $\chi_j$ be the character of $\Gamma$ indexed by $j$.  We can view $\chi_j$ as a column of $H$.
Suppose the first column of $H$ corresponds to the trivial character and the first row corresponds to the identity in $\Gamma$.
Condition~(1) of Theorem~\ref{thm:FR} is equivalent to $\chi_{j}(a-b) \in \{-1,1\}$ for all $j\in \Gamma$.  That is, $a-b$ has order $2$.
Condition~(2) of Theorem~~\ref{thm:FR} is equivalent to 
\begin{equation*}
\e^{-\ii \tau \lambda_j} =\alpha + \chi_{j}(a-b) (1-\alpha),
\end{equation*}
for $j \in \Gamma$.
\end{proof}

\begin{example}
For  $n\geq 3$, the cocktail party graph $(n K_2)^c$ (i.e., the graph complement of the ladder rung graph $n K_2$) is diagonalisable by
the character table of $\ZZ_{2n}$.  It admits 
(Laplacian) fractional revival from vertex $a$ to $n+a$ at time $\frac{\pi}{n}$
with $\gamma = -\frac{\pi}{n}$; see \cite{FR2}.  
\end{example}

We apply Corollary~\ref{cor:FR} to $G_1\ltimes G_2$ where both $G_1$ and $G_2$ are diagonalisable by the same complex Hadamard matrix.  We assume $G_1$ and $G_2$ have the same vertex set $V$ and use $V\times \ZZ_2$ to denote the vertex set of $G_1 \ltimes G_2$.
\begin{corollary}
Let $G_1$ and $G_2$ be graphs  diagonalisable by 
a dephased complex Hadamard matrix $H$ of order $n$. 
Let 
\begin{equation*}
L(G_1) \bh_j = \lambda_j \bh_j
\quad \text{and} \quad
L(G_2) \bh_j = \mu_j \bh_j, \quad
\text{for $j=1,\ldots,n$,}
\end{equation*}
and let $d_2$ be the degree of $G_2$.
Then $G_1 \ltimes G_2$ has $( \e^{\ii\gamma}\cos \gamma, -\e^{\ii \gamma} \ii\sin\gamma )$--fractional revival from $(a,0)$ to $(a,1)$ at time $\tau$ if 
and only if 
\begin{equation*}
\gamma = -d_2\tau \pmod{\pi}
\end{equation*} 
and 
\begin{equation*}
\tau\lambda_j+\tau\mu_j = \tau\lambda_j-\tau\mu_j =0 \pmod{2\pi}
\end{equation*}
for $j=1,\ldots,n$.
\end{corollary}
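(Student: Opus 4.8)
The plan is to diagonalise $L(G_1\ltimes G_2)$ explicitly and then invoke Corollary~\ref{cor:FR}. By Theorem~\ref{thm:regularity} both $G_1$ and $G_2$ are regular; write $d_1$ for the degree of $G_1$ and keep $d_2$ for the degree of $G_2$. Ordering the vertex set $V\times\ZZ_2$ so that all copies $(v,0)$ come before all copies $(v,1)$, the double cover has adjacency matrix $\begin{bmatrix}A(G_1)&A(G_2)\\A(G_2)&A(G_1)\end{bmatrix}$ and hence Laplacian $L(G_1\ltimes G_2)=(d_1+d_2)\I_{2n}-\begin{bmatrix}A(G_1)&A(G_2)\\A(G_2)&A(G_1)\end{bmatrix}$. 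First I would record that $\widetilde H:=\begin{bmatrix}1&1\\1&-1\end{bmatrix}\otimes H=\begin{bmatrix}H&H\\H&-H\end{bmatrix}$ is again a dephased complex Hadamard matrix, now of order $2n$; its columns split into $n$ columns of type $(+)$, namely $\bh_j$ stacked on $\bh_j$, and $n$ columns of type $(-)$, namely $\bh_j$ stacked on $-\bh_j$, for $j=1,\dots,n$. Using $A(G_i)=d_i\I_n-L(G_i)$, so that $A(G_1)\bh_j=(d_1-\lambda_j)\bh_j$ and $A(G_2)\bh_j=(d_2-\mu_j)\bh_j$, a short block computation shows that $\widetilde H$ diagonalises $L(G_1\ltimes G_2)$ (this is also the merge case of the corollary following Lemma~\ref{lem:tensor}), with the type-$(+)$ column for $j$ having eigenvalue $\lambda_j+\mu_j$ and the type-$(-)$ column for $j$ having eigenvalue $2d_2+\lambda_j-\mu_j$.

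Next I would observe that the two rows of $\widetilde H$ indexed by the vertices $(a,0)$ and $(a,1)$ coincide on every type-$(+)$ column and are negatives of one another on every type-$(-)$ column; thus condition~(1) of Corollary~\ref{cor:FR} is automatically satisfied for this pair. Applying Corollary~\ref{cor:FR} to $G_1\ltimes G_2$, $\widetilde H$ and the vertices $(a,0),(a,1)$ then gives that $(\e^{\ii\gamma}\cos\gamma,-\e^{\ii\gamma}\ii\sin\gamma)$--fractional revival occurs from $(a,0)$ to $(a,1)$ at time $\tau$ if and only if there is a real $\gamma$ with $-\tau(\lambda_j+\mu_j)\equiv 0\pmod{2\pi}$ for every $j$ (the type-$(+)$ columns, whose two rows agree) and $-\tau(2d_2+\lambda_j-\mu_j)\equiv 2\gamma\pmod{2\pi}$ for every $j$ (the type-$(-)$ columns, whose two rows differ by a sign).

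It remains to rewrite this system in the stated form. For the forward direction the first family is exactly $\tau\lambda_j+\tau\mu_j\equiv 0\pmod{2\pi}$; the member $j=1$ of the second family, using $\lambda_1=\mu_1=0$ (since $\bh_1=\bone$), gives $2\gamma\equiv-2d_2\tau\pmod{2\pi}$, i.e.\ $\gamma\equiv-d_2\tau\pmod{\pi}$; and subtracting this relation from the general member of the second family yields $\tau\lambda_j-\tau\mu_j\equiv 0\pmod{2\pi}$. The converse is a direct substitution of these three statements back into the two families. (One should also recall that genuine fractional revival requires $\beta\neq0$, equivalently $d_2\tau\not\equiv0\pmod{\pi}$, but this is subsumed by the hypothesis that fractional revival takes place.) The only points that genuinely need care are the eigenvalue bookkeeping for the type-$(-)$ eigenvectors --- the coefficient $2d_2$, and the fact that $d_1$ drops out, both following from $A(G_i)=d_i\I_n-L(G_i)$ --- together with lining up the tensor-product ordering of $V\times\ZZ_2$ with the labels $(a,0),(a,1)$; the rest is routine.
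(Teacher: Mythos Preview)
Your proof is correct and follows essentially the same route as the paper: diagonalise $L(G_1\ltimes G_2)$ by $\begin{bmatrix}H&H\\H&-H\end{bmatrix}$, read off the eigenvalues $\lambda_j+\mu_j$ and $2d_2+\lambda_j-\mu_j$, note that condition~(1) of Corollary~\ref{cor:FR} holds automatically for the pair $(a,0),(a,1)$, and then specialise the $j=1$ case of the type-$(-)$ congruence to extract $\gamma\equiv -d_2\tau\pmod\pi$ before subtracting. The only cosmetic difference is that you write the Laplacian as $(d_1+d_2)\I_{2n}$ minus the block adjacency, whereas the paper uses the equivalent block form $\begin{bmatrix}L(G_1)+d_2\I&-A(G_2)\\-A(G_2)&L(G_1)+d_2\I\end{bmatrix}$.
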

\begin{proof}
The Laplacian matrix of $G_1\ltimes G_2$ 
\begin{equation*}
\begin{bmatrix}
L(G_1)+d_2\I & -A(G_2)\\-A(G_2) & L(G_1)+d_2\I
\end{bmatrix}
\end{equation*}
satisfies
\begin{equation*}
L(G_1\ltimes G_2) \begin{bmatrix}\bh_j\\\bh_j\end{bmatrix} = (\lambda_j+\mu_j)\begin{bmatrix}\bh_j\\\bh_j\end{bmatrix} 
\quad \text{and} \quad
L(G_1\ltimes G_2) \begin{bmatrix}\bh_j\\-\bh_j\end{bmatrix} = (\lambda_j-\mu_j+2d_2)\begin{bmatrix}\bh_j\\-\bh_j\end{bmatrix}.
\end{equation*}
As $L(G_1\ltimes G_2)$ is diagonalisable by
\begin{equation*}
\widehat{H}=\begin{bmatrix} H & H\\H & -H\end{bmatrix},
\end{equation*}
condition~(\ref{cor:FR(1)}) of Corollary~\ref{cor:FR} holds for $\widehat{H}$.
Condition~(\ref{cor:FR(2)}) of Corollary~\ref{cor:FR} holds if and only if
\begin{equation*}
\begin{cases}
&-\tau(\lambda_j+\mu_j) = 0 \pmod{2\pi},\\
&-\tau(\lambda_j-\mu_j+2d_2) = 2\gamma \pmod{2\pi}
\end{cases}
\qquad \text{for $j=1,\ldots,n$.}
\end{equation*}
Since $\lambda_1=\mu_1=0$, the second equation gives $2d_2\tau = -2\gamma \pmod{2\pi}$,
hence
\begin{equation*}
\tau\lambda_j+\tau\mu_j = \tau\lambda_j-\tau\mu_j =0 \pmod{2\pi}
\end{equation*} 
for all $j$.
\end{proof}

\begin{example}
The bipartite double cover of $K_n$ is ${K_n}^c\ltimes K_n$.  
For $n\geq 3$, (Laplacian) fractional revival occurs from $(a,0)$ to $(a,1)$ at time $\tau=\frac{2\pi}{n}$ with $\gamma=\frac{2\pi}{n}$.
\end{example}

\begin{example}
Let $G_2$ be the $(2m+1)$--cube and $G_1={G_2}^c$, for $m\geq 1$.  Then both
$G_1$ and $G_2$ are diagonalisable by
\begin{equation*}
H=\begin{pmatrix}
1&1\\1&-1
\end{pmatrix}
^{\otimes (2m+1)}.
\end{equation*}
The spectrum of $L(G_2)$ is $\{2s\ : \ s=0,\ldots,2m+1\}$, see \cite{FR2}.   
For $j=1,\ldots, 2^{2m+1}$, we have $\lambda_j+\mu_j=2^{2m+1}$ and
\begin{equation*}
\lambda_j-\mu_j \in \{2^{2m+1} - 4s\ : \ s=0,\ldots,2m+1\}.
\end{equation*}
Since $G_2$ has degree $d_2=2m+1$, the double cover of $K_n$ given by
$G_1\ltimes G_2$ has  (Laplacian) perfect state transfer ($\gamma=\frac{\pi}{2}$) from vertex $(a,0)$
to $(a,1)$ at time $\frac{\pi}{2}$.
\end{example}

\section*{Acknowledgements}
 
S.F.\ is supported by NSERC Discovery Grant number RGPIN--2019--03934. 
S.K.\ is supported by NSERC Discovery Grant number RGPIN--2019--05408. 
S.N.\ is supported by NSERC Discovery Grant number RGPIN--2019--05275. 
S.P.\ is supported by NSERC Discovery Grant number  RGPIN--2019--05276, the Canada Foundation for Innovation grant number  35711, and the Canada Research Chairs grant number 231250. S.P.\ also acknowledges support from the Brandon University Research Committee.

\bibliographystyle{plain}
\bibliography{references}
\end{document}